\numberwithin{equation}{section}
\newtheorem{theorem}{Theorem}[section]
\newtheorem{lem}[theorem]{Lemma}
\newtheorem{thm}[theorem]{Theorem}
\newtheorem{cor}[theorem]{Corollary}
\newtheorem{rem}[theorem]{Remark}
\newcommand{\norm}[1]{\left\Vert#1\right\Vert}
\newcommand{\abs}[1]{\left\vert#1\right\vert}
\newcommand{\set}[1]{\left\{#1\right\}}
\newcommand{\Real}{\mathbb R}
\newcommand{\R}{\mathbb R}
\def\s{\,\,\,\,}
\def\dint{\displaystyle{\int}}
\def\endproof{$\hfill\Box$\\}
\def\R{\mathbb{R}}
\def\C{\mathbb{C}}
\def\so{\mathfrak{so}(\R^n)}
\title[3-circle Theorem for Willmore surface I]{\bf 3-circle Theorem for Willmore surface I}
\author{Yuxiang Li}
\address{Department of Mathematical Sciences, Tsinghua University}
\email{liyuxiang@tsinghua.edu.cn}
\author{Hao Yin}
\address{School of Mathematical Sciences, Unviersity of Science and Technology of China}
\email{haoyin@ustc.edu.cn}
\thanks{Y.Li is supported by National Key R\&D Program of China 2022YFA 1005400; H. Yin is supported by NSFC 12141105.}
\date{}
\begin{document}

\begin{abstract}
	In this paper, we study the blow-up of Willmore surfaces. By using the 3-circle theorem, we prove a decay estimate of the second fundamental form along the neck region. This estimate provides a new perspective and streamlined proofs to a few key results in this field, such as the energy identity(quantization), removable singularities and gap theorem.
\end{abstract}

\maketitle
\section{Introduction}
In geometric analysis, especially in the study of variational problems, one often encounters the phenomenon of bubble tree convergence. For example, the harmonic maps from surfaces, the Yang-Mills fields in dimension four, the Willmore surfaces in $\Real^n$ and so on. It is important to understand how various quantities behave after taking this limit.

The study of bubble tree convergence first appeared in the research on harmonic maps from surfaces to manifolds and the study of harmonic heat flow. The bubble tree phenomenon appears because of the $\epsilon$-regularity theorem of the harmonic map equation. In this area, the two most natural questions are: 1. whether the energy identity holds, that is, whether the necks carry away energy; 2. whether the necks exist, and if so, what their structure is. 
Many results have been obtained along this line. For example, when the tension fields are $L^p$-bounded for $p>1$, there are no necks between bubbles \cite{Q-T,D,W-W-Z}; for a harmonic map sequence from a sequence of Riemann surfaces, with diverging complex structures, Chen-Tian proved that the necks converge to geodesics (see \cite{C-T} and \cite{C-L-W}).

In studying neck regions,  typically, we use the Ding-Tian's reduction \cite{D-T} or Parker's reduction \cite{P} to transform the problem into the proof of the following decay estimate:

{\it If $u_k$ is a sequence of harmonic maps defined on $[0,T_k]\times S^1$ with $T_k\rightarrow+\infty$, satisfying 
$$
\lim_{T\rightarrow+\infty}\varlimsup_{k\rightarrow+\infty}\sup_{t\in[T,T_k-T]}\int_{[t,t+1]\times S^1}|\nabla u_k|^2 dtd\theta=0,
$$
then for some $q>0$,
$$
\int_{[t,t+1]\times S^1}|\nabla u_k|^2 dtd\theta\leq C(e^{-qt}+e^{-q(T_k-t)})\qquad \text{for} \quad t\in [0,T_k-1].
$$}
With the above decay estimate, it is quite straightforward to derive
$$
\lim_{T\rightarrow+\infty}\lim_{k\rightarrow+\infty}\int_{[T,T_k-T]\times S^1}|\nabla u_k|^2dtd\theta=0,\s \lim_{T\rightarrow+\infty}\lim_{k\rightarrow+\infty}osc_{[T,T_k-T]\times S^1}u_k=0,
$$
which implies the energy identity and the neckless theorem.

There are two methods to obtain the decay. The first one is to derive various differential inequalities that guarantee the convexity of the tangential energy. This method was originated from Sack-Uhlenbeck \cite{S-U}, and it was implemented independently by Ding \cite{D} and by Lin-Wang \cite{L-W}. The second is the 3-circle theorem introduced by Qing-Tian (see \cite{Q-T}). Note that the 3-circle theorem has a long history and a much wider range of generalizations and applications (for example, \cite{C-D-M,L,X}).  \\


Starting from 2001, Kuwert and Sch\"atzle began using the blow-up analysis methods to study Willmore surfaces and Willmore flows in $\R^n$, obtaining 
$\epsilon$-regularity, gap theorems, and removability theorems for singularities in codimension one \cite{K-S,K-S2,K-S3}. In 2008, Rivi\`{e}re \cite{R} discovered that the Willmore surface equation could be written as a system of two equations with Jacobian structures, allowing the use of Wente's inequality and harmonic analysis methods to study the Willmore equation. Using these equations, Rivi\`ere also obtained $\epsilon$-regularity and removability theorems for singularities in general codimensions.

In this paper, using the 3-circle lemma, we revisit the energy identity and the neck structure problem of a sequence of closed Willmore surfaces with bounded Willmore energy $W(f)$ and bounded genus, with the technical assumption that the conformal structure remains bounded. Using the reduction method due to Ding-Tian \cite{D-T} and Parker \cite{P} again, we can reduce the study of the energy identity and the neck structure to the following problem.

\noindent{\it Let $f_k:[0,T_k]\times S^1\rightarrow\R^n$ be a sequence of conformal and Willmore immersion.
We assume: 

\begin{itemize}

\item[A.1)] $T_k\rightarrow+\infty$.
 
\item[A.2)] for any $t_k\in(0,T_k)$ with $t_k\rightarrow+\infty$ and $T_k-t_k\rightarrow+\infty$, we have 
$$
\int_{[t_k,t_k+1]\times S^1}|A_k|^2dV_{g_k}\rightarrow 0.
$$

\item[A.3)] if $e^{2u_k}(dt^2+d\theta^2)=f_k^*(g_{\Real^n})$, then $\sup_{[0,T_k]\times S^1} \abs{\nabla u_k}\leq \beta$.

\item[A.4)] There is a  Willmore immersion $\tilde{f}_k$ defined on $D$ such that $f_k(t,\theta)= \tilde{f_k}(e^{-t},\theta)$ for all $(t,\theta)\in [0,T_k]\times S^1$.
\end{itemize}

{\bf Q1:} Do we have the energy identity
\begin{equation}\label{Q.energy.identity} 
\lim_{T\rightarrow+\infty}\lim_{k\rightarrow+\infty}
\int_{[T,T_k-T]\times S^1}|H_{k}|^2 dV_{g_k}=0?
\end{equation}

{\bf Q2:} Let ${\bf n}_k$ be the Gauss map (into the Grassmannian $G(2,n)$). Do we have
\begin{equation}\label{Q.no.neck}
\lim_{T\rightarrow+\infty}\lim_{k\rightarrow+\infty}
osc_{[T,T_k-T]\times S^1}{\bf n}_k=0?
\end{equation}

{\bf Q3:} If this is no true, what is the limit of the image of the Gauss map of ${\bf n}_k$?
}

\begin{rem}
	\label{rem:a4}
	The assumption A.4) holds naturally when we consider a convergence of Willmore immersions with the induced conformal structres bounded in the moduli space. 
\end{rem}

\begin{rem}
	\label{rem:a3}	
	The assumption A.3) does not follow from A.1), A.2) and A.4). To see this, simply consider the sequence $f_k(t,\theta)= (e^{-kt}\cos k\theta, e^{-kt}\sin k\theta,0)\in \Real^3$. We do need it for the proof of our main results. However, it arises naturally, when $f_k([0,T_k]\times S^1)$ is a part of closed surface $\Sigma'_k$ with bounded energy and genus. We give a proof of this claim in Lemma \ref{lem:remark}.
\end{rem}

Based on the results in \cite{R}, Bernard and Rivi\`ere, in \cite{B-R}, gave an affirmative answer to Q1 under the assumptions A.1)-A.4). In \cite{M-R2}, Michelat and Rivi\`ere proved some $L^{(2,1)}$ quantization result (Theorem C therein) and provided the affirmative answer to Q2. 

In this paper, we adapt the three circle argument to prove the decay estimate (Theorem \ref{thm:out}), which implies again the affirmative answers to Q1 and Q2.

Before stating our theorem, let's first introduce two definitions. Given $c\in\R^n$, $S\in \so$(the set of $n \times n$ skew-symmetric matrices), and a conformal and Willmore immersion $f:[0,T]\times S^1\rightarrow \R^n$, we define

\begin{eqnarray*}
\tau_1(f,c)&=& \left( -2\int_{\set{t}\times S^1}\partial_t H d\theta - 4\int_{\set{t}\times S^1} (H\cdot A_{ti}) g^{ij} (\partial_j f) d\theta+ \int_{\set{t}\times S^1} \abs{H}^2 \partial_t f  d\theta\right)\cdot c \\
 \tau_2(f,S)&=&2\int_{\set{t}\times S^1} (H\cdot \partial_t (Sf) - \partial_t H\cdot Sf) d\theta - 4\int_{\set{t}\times S^1} (H\cdot A_{ti}) g^{ij} (\partial_j f\cdot Sf) d\theta\\ 
		&& + \int_{\set{t}\times S^1} \abs{H}^2 \partial_t f\cdot Sf d\theta.
\end{eqnarray*}
It will be proved in Section \ref{sec:poho} that $\tau_1$ and $\tau_2$ are independent of $t$ in general and that if $A.4)$ is assumed, then $\tau_1(f,c)=0$ and $\tau_2(f,S)=0$ for any $c\in \Real^n$ and $S\in \mathfrak{so}(\Real^n)$.

We need to point out that $\tau_1(f,c)=0$ corresponds to $d\omega=0$ in the paper by Kuwert-Sch\"atzle \cite{K-S2}, and it is equivalent to the vanishing of the first residue defined in \cite{B-R0}. While the second residue is defined in \cite{B-R0} to be something else, both $\tau_1(f,c)=0$ and $\tau_2(f,S)=0$ were known as conservation laws since \cite{R} and were shown to be related to the Noether's theorem in \cite{B}.

Our main result is:

\begin{thm}
	\label{thm:out}
	For any $q\in (0,2)$ and $m\in \mathbb Z_+$, there exists $\delta_1(m,q)>0$ such that if $f:[0,T]\times S^1\to \Real^n$ is a conformal and Willmore immersion with $f^*(g_{\Real^n})=e^{-2mt+2v}(dt^2+d\theta^2)$, satisfying
	\begin{enumerate}
		\item[(a)] $\tau_1(f,S)=\tau_2(f,c)=0$ for any $S\in \mathfrak{so}(\Real^n),\, c\in \Real^n$;
		\item[(b)] $\sup_{[0,T]\times S^1} \abs{\nabla v}<\delta_1$;
		\item[(c)] $\tilde{\delta}:=\sup_{t\in [0,T-1]}\int_{[t,t+1]\times S^1}\abs{A}^2 dV_g <\delta_1$,
	\end{enumerate}
then
\begin{equation}
	\label{eqn:decayaa}
	\int_{[t,t+1]\times S^1} \abs{A}^2 dV_g \leq C \tilde{\delta} \left( e^{-qt} + e^{-q(T-t)} \right) \qquad \text{for} \quad  t\in [0,T-1].
\end{equation}
\end{thm}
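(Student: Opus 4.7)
The plan is to prove a 3-circle (sub-convex) iteration for the quantity $F(t) := \int_{[t,t+1]\times S^1} \abs{A}^2 dV_g$ of the form
\begin{equation*}
F(t) \leq \lambda \bigl(F(t-1) + F(t+1)\bigr) + (\text{small nonlinear error}),
\end{equation*}
with $\lambda < 1/(2\cosh q)$, after which a standard discrete two-sided iteration yields the desired bound (\ref{eqn:decayaa}). Hypotheses (b) and (c), combined with a sufficiently small $\delta_1$, are what make the nonlinear error absorbable into the linear part.

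To derive the linear inequality, I would Fourier-decompose $f$ (or a normal-graph perturbation over the background model $z\mapsto z^m$) on each slice $\{t\}\times S^1$. After peeling off the conformal factor $e^{-2mt+2v}$ and linearizing the Willmore system about the flat background, each Fourier mode satisfies, to leading order, a constant-coefficient 4th order ODE of the form $(\partial_t^2 - n^2)^2 \hat f_n = 0$, giving the $n$-th mode two characteristic decay rates $e^{-n|t|}$ and $t e^{-n|t|}$. Consequently, the only directions with decay rate below $q < 2$ are the low Fourier modes $|n|\le 1$, and these correspond precisely to Jacobi fields of the background--infinitesimal translations and rotations of $\R^n$ acting on the immersed surface.

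The conservation laws $\tau_1(f,c)=0$ and $\tau_2(f,S)=0$ are the Noether currents for ambient translations and rotations respectively, so they are exactly the constraints needed to eliminate those offending low-mode components. Concretely, one expands $\tau_1,\tau_2$ in the Fourier modes of $f,H,A$, and reads off identifications showing that the Fourier coefficients responsible for slow decay must vanish; and since $\tau_1,\tau_2$ are independent of $t$ (as will be established in Section \ref{sec:poho}), this cancellation is uniform along the cylinder. With these low modes removed, the remaining high modes obey the 3-circle inequality for $\lambda$ small enough to realize any prescribed $q < 2$, thereby completing the linear step.

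I expect the main obstacle to be two-fold. First, carrying out the Jacobi-field analysis uniformly in $t$ when the background is the branched model with $m\geq 2$: the Fourier mode $n=m$ plays a distinguished role due to the $m$-fold cover structure, and one must identify all slow-decay directions (coming from both translations of $\R^n$ and rotations that do not preserve the branch point) and verify that $\tau_1$ and $\tau_2$ pin down exactly these. Second, estimating the nonlinear error terms--coming from the $v$-perturbation of the metric and from the Willmore nonlinearity itself--uniformly on each unit interval, so that they truly absorb into the linear 3-circle inequality after choosing $\delta_1$ small depending on $m$ and $q$. Once these two technical pieces are in place, the iteration itself is routine and delivers the claimed exponential decay with any rate $q<2$.
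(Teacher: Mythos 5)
Your proposal diverges from the paper's proof at several structural points, and I believe there is a genuine gap.

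The paper does \emph{not} attempt a single 3-circle iteration for $\int_{[t,t+1]\times S^1}|A|^2\,dV_g$ by linearizing the full fourth-order Willmore system as you suggest. Instead it splits the argument into two second-order problems: (i) a 3-circle lemma for the mean curvature $H$, which after rescaling converges to a vector-valued harmonic function because $H$ solves a divergence-form second-order equation whose coefficients vanish in the small-energy limit; and (ii) a 3-circle lemma for the Gauss map $\mathbf n$, which solves $\Delta \mathbf n + A_{G(2,n)}(d\mathbf n, d\mathbf n) = \nabla^\perp H$ and so is approximately harmonic precisely in the regime where $H$ is much smaller than $A$. The paper then runs a case analysis on each block $Q_i$: if $H$ is not small compared with $A$ there, the already-proved decay of $H$ controls $A$; if $H$ is small compared with $A$, the 3-circle for $\mathbf n$ applies. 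Your proposal has neither of these ingredients, and in particular it is missing the key Pohozaev-type pointwise inequality $\bigl||\partial_t \mathbf n|^2 - |\partial_\theta\mathbf n|^2\bigr|\le C e^{2u}|A|\,|H|$, which is what replaces the usual Pohozaev identity for harmonic maps on a cylinder and makes the 3-circle for $\mathbf n$ close. Without it, there is no mechanism to exclude the constant and linear modes of the would-be harmonic limit.

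Your identification of the obstructing modes is also not correct. You claim the slow-decay directions are the low Fourier modes $|n|\le 1$, linked to ambient translations and rotations. In the paper's 3-circle lemma for $H$ (Lemma \ref{3-circle.harmonic}), one works with the weight $e^{-2mt}$ inherited from $dV_g$, and after rescaling the obstruction is exactly the $n=m$ Fourier mode -- the coefficients $b_m$, $b'_m$ of $e^{mt}\cos m\theta$ and $e^{mt}\sin m\theta$ in the harmonic limit. These are killed by $\tau_2(f,S)=0$ for suitable $S\in\mathfrak{so}(\R^n)$, while $\tau_1(f,c)=0$ is needed only to make the vanishing of $\tau_2$ invariant under the translations used in the blow-up (Remark \ref{rem:tau1}). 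Thus the roles of $\tau_1$ and $\tau_2$ are asymmetric, contrary to what one would read off your proposal. Similarly, your suggestion that a fourth-order linearization of the Willmore system yields a clean factorized ODE $(\partial_t^2-n^2)^2\hat f_n=0$ glosses over the weight $e^{-2mt}$, the branched-cone geometry, and the fact that the obstruction spectrum is centered at $n=m$, not at $n\le 1$. If you want to rescue the direct single-step approach you would, at minimum, need to recompute the linearized indicial roots in the correct weighted spaces, explain how to control the $\mathbf n$-equation's source term $\nabla^\perp H$ when $H$ and $A$ are comparable, and supply a Pohozaev-type substitute; as stated, the proposal does not do any of these.
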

\begin{rem}
We can not expect a decay rate $q>2$ in general and we do not know if $q=2$ is possible.
\end{rem}

\begin{rem}
We refer to Theorem 3.30 in the recent paper \cite{M-R1} of Michelat and Rivi\`ere for a similar but different decay estimate. 
\end{rem}

The proof of this theorem uses the 3-circle argument twice. First, we apply it to the equation of mean curvature $H$,
\[
2\Delta H + 4{\rm div}(H\cdot A_{pq}g^{ip}\partial_i f) -{\rm div}(|H|^2\nabla f)=0.
\]
Under the assumption (c), this equation implies that $H$ is almost a (vector-valued) harmonic function on $[0,T]\times S^1$. In order to apply the 3-circle lemma for harmonic functions, we divide the cylinder $[0,T]\times S^1$ into cylinders of fixed length $Q_i$. We would like to show some convexity property for the quantity $\int_{Q_i} \abs{H}^2 dV_g $ (see Section \ref{sec:3will} for details). This is not true even for an exact harmonic function as indicated by the special case $e^{mt}\cos m\theta$, $e^{mt}\sin m\theta$. It turns out that these two are the only obstructions. Fortunately, thanks to the assumption (a), we are able to show that they do not appear in the limit when we apply the 3-circle to (scaled) $H$. Therefore, we obtain the decay of Willmore energy (Theorem \ref{3-circle.H}).

Next, we apply the 3-circle lemma to the equation of the Gauss map ${\bf n}$,
\[
	\Delta {\bf n} + A_{G(2,{n})}(d{\bf n},d{\bf n})= \nabla^\perp H
\]
where $A_{G(2,n)}$ is the second fundamental form of the Grassmannian $G(2,n)$. Since $A=\nabla {\bf n}$, it suffcies to prove the decay for the Dirichlet energy of ${\bf n}$ along the cylinder. For that purpose, we compare the size of $H$ and $A$. For those parts of the cylinder on which $A$ is smaller than a multiple of $H$, the decay of $A$ follows from that of $H$, which is known. For those parts that $H$ is significantly smaller than $A$, the above equation shows that ${\bf n}$ is ``almost" a harmonic map. By now, it is standard to apply the 3-circle lemma to study the energy decay of a harmonic map from cylinder and a key ingredient of the proof is some Pohozaev identity which implies that the tangential energy on each $\set{t}\times S^1$ is the same as the radial energy. In our case, we are able to show that this is ``almost" true when $H$ is much smaller than $A$ (see Lemma \ref{lem:poho.A} for exact statement). Note that ${\bf n}$ is only defined on a cylinder (that is not simply connected) and in general we do not expect the Pohozaev identity even for harmonic maps defined on cylinders.

The details of these two steps appear in Section \ref{sec:3will} and Section \ref{sec:A} respectively. Together they prove Theorem \ref{thm:out}. In Section \ref{sec:decay}, we apply Theorem \ref{thm:out} to give positive answers to Q1 and Q2 under the assumption of A.1)-A.4). The main task is to verify the assumption (b). This follows from A.2) and A.3) via a proof by contradiction. 

In Section \ref{sec:app}, we discuss further applications of Theorem \ref{thm:out}. It is natural that a decay estimate as \eqref{eqn:decayA} has applications such as removable singularity theorem and gap theorem. A problem is that for these applications, it is not natural to assume A.4), hence we will have to prove the assumption (a) before we can use Theorem \ref{thm:out}. In Section \ref{subsec:remove}, we reprove the removable singularity theorem due to Kuwert and Sch\"atzle \cite{K-S2, K-S3} and Rivi\`ere \cite{R} using our decay estimate. In Section \ref{subsec:gap}, we prove a version of gap theorem for conformal and Willmore immersions defined on $\Real \times S^1$.  \\

To conclude the introduction, we discuss briefly the results on Willmore immersions with the assumptions A.1)-A.3), but not A.4). Under conditions A.1)-A.3), Laurain and Rivi\`ere studied problem Q1 in \cite{L-R} and they provide some sufficient condition for the validity of the energy identity \eqref{Q.energy.identity}. Very recently, Dorian Martino proved that energy identity is true when $n=3$ and the index is bounded \cite{M}. He also proved that the conformal Gauss map converges to a geodesic. In a subsequent paper, we will apply the 3-circle theorem again to show that for a sequence of conformal and Willmore immersions satisfying A.1)-A.3), but not A.4), the image of the Gauss map of $f_k$ converges to a geodesic in $G(2,n)$.

\vskip 1cm
\noindent {\bf Acknowledgement.} The authors would like to thank Professor Ernst Kuwert for his insights and helpful discussions.

\section{A Pohozaev type identity}
\label{sec:poho}

As in the proof of the energy identity and no neck theorem in the harmonic map case (see \cite{D-T,L-W,Q-T} for example), it is important to have some Pohozaev identity. In the case of a harmonic map $u:D\to N$, where $D$ is the unit disk in $\Real^2$ and $N$ is any target manifold, the Pohozaev identity implies that the tangential part of the Dirichet energy is exactly the same as the radial part. In the Willmore case, it is not immediately clear what are the tangential part and the radial part of the energy. Instead, we find that the following result will serve a similar purpose in the proof of energy decay in Theorem \ref{thm:out}.

\begin{thm}
	\label{thm:poho} Let $f:D \to \Real^n$ be a smooth conformal Willmore immersion. Let $(r,\theta)$ be the polar coordinate and $t=-\log r$. For any $t>0$, any $c\in \Real^n$ and any $S\in \mathfrak{so}(\Real^n)$, we have
	\begin{equation}
		\label{eqn:poho}
		\begin{split}
			0=& 2\int_{\set{t}\times S^1} (H\cdot \partial_t (Sf) - \partial_t H\cdot (Sf+c)) d\theta - 4\int_{\set{t}\times S^1} (H\cdot A_{ti}) g^{ij} (\partial_j f\cdot (Sf+c)) d\theta \\
			  & + \int_{\set{t}\times S^1} \abs{H}^2 \partial_t f\cdot (Sf+c) d\theta.
		\end{split}
	\end{equation}
Or equivalently, 
\[
	\tau_1(f,c)=\tau_2(f,S)=0.
\]
Here $g$ is the pullback metric $f^*(g_{\Real^n})$ and $A_{ti}$ is $A(\partial_t f,\partial_i f)$ with $A$ the second fundamental form. 
\end{thm}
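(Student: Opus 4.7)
The plan is to interpret $\tau_1$ and $\tau_2$ as fluxes of the conservation laws associated (via Noether's theorem) with the translation and rotation invariance of the Willmore functional, and then to apply Stokes's theorem on a shrinking annulus, killing the boundary term at the origin by the smoothness of $f$ there.

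First, I would put the Willmore equation in flat divergence form in the conformal coordinates $(t,\theta)$ with $t=-\log r$. After multiplying the equation stated in the introduction by $\sqrt{g}=e^{2u}$ and collecting terms, it takes the form $\partial_t V^t+\partial_\theta V^\theta=0$, where
\[
V^t := -2\partial_t H - 4(H\cdot A_{ti})g^{ij}\partial_j f + \abs{H}^2\partial_t f,
\]
and $V^\theta$ is the analogous expression with $t$ replaced by $\theta$. For any constant $c\in\Real^n$, integrating the identity over $\set{t}\times S^1$ kills the $\partial_\theta$ term by periodicity, so $F(t):=\int_{\set{t}\times S^1}V^t\cdot c\,d\theta=\tau_1(f,c)$ is independent of $t$. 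Since $f$ extends smoothly across $r=0$, every factor in $V^t$ decays: $\partial_t=-r\partial_r$ gives a factor $r$ on derivatives of smooth functions, while $A_{ti}=O(r^2)$ in $(t,\theta)$-coordinates absorbs the $O(r^{-2})$ blow-up of $g^{ij}$. Hence $\abs{V^t(t,\theta)}\le Cr$, $F(t)\to 0$ as $t\to+\infty$, and therefore $\tau_1(f,c)\equiv 0$.

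For $\tau_2(f,S)$, I would test $V^i$ against $Sf$. By the product rule and $\partial_i V^i=0$,
\[
\partial_t(V^t\cdot Sf)+\partial_\theta(V^\theta\cdot Sf)=V^t\cdot S\partial_t f+V^\theta\cdot S\partial_\theta f.
\]
The key algebraic step is to show that the right-hand side is the flat divergence $-2\sum_i\partial_i(H\cdot S\partial_i f)$ via three cancellations, all using $S^{T}=-S$: (i) $\abs{H}^2(\partial_i f\cdot S\partial_i f)=0$; (ii) the middle-term contribution $\sum_{i,j,k}(H\cdot A_{ij})g^{jk}(\partial_k f\cdot S\partial_i f)$ vanishes by swapping $i\leftrightarrow k$ and using symmetry of $A$; (iii) the remaining piece $-2\sum_i\partial_i H\cdot S\partial_i f$ equals $-2\sum_i\partial_i(H\cdot S\partial_i f)+2H\cdot S\Delta_{\mathrm{flat}}f$, and the correction vanishes because $\Delta_{\mathrm{flat}}f=2e^{2u}H$ for a conformal immersion together with $H\cdot SH=0$. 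Combining,
\[
\partial_t\bigl[V^t\cdot Sf+2H\cdot\partial_t(Sf)\bigr]+\partial_\theta\bigl[V^\theta\cdot Sf+2H\cdot\partial_\theta(Sf)\bigr]=0,
\]
and the $\theta$-integral of the bracket matches $\tau_2(f,S)$ by inspection of the definitions. As in the first step, $Sf$ stays bounded at the origin while each other factor is $O(r)$, so the integral tends to $0$ as $t\to+\infty$ and $\tau_2\equiv 0$. The combined form \eqref{eqn:poho} then follows by linearity in $(c,S)$.

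The main obstacle is the algebraic reorganization in the $Sf$ test: identifying the correct combination $V^i\cdot Sf+2H\cdot\partial_i(Sf)$ whose flat divergence vanishes. This hinges on the interplay between skew-symmetry of $S$, symmetry of $A$, and the conformal identity $\Delta_{\mathrm{flat}}f=2e^{2u}H$. Once that reorganization is in place, the rest of the proof is Stokes on a shrinking annulus together with the regularity of $f$ at the origin.
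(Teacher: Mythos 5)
Your argument is correct, but it closes the proof by a genuinely different route than the paper. Both approaches begin with the same variational data: the Willmore equation in divergence form (which is just the first-variation formula integrated by parts) and the Noether currents coming from translations and rotations. Where you diverge is in how you establish the vanishing. You prove conservation on an annulus, i.e.\ that $\tau_1(f,c)$ and $\tau_2(f,S)$ are independent of $t$ (the content of Remark~\ref{rem:t} in the paper), and then kill the constant by a decay estimate near the origin: in $(t,\theta)$-coordinates each factor of $V^t$ is $O(r)$ because $\partial_t=-r\partial_r$, $e^{u}\sim r$, $A_{ti}=O(r^2)$, and $g^{ij}=O(r^{-2})$, while $Sf+c$ stays bounded, so the flux $\to 0$ as $t\to+\infty$. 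The paper instead integrates over the full disk $D_r$ (singularity included, since $f$ is smooth on $D$), uses $W(e^{sS}f+sc,D_r)=W(f,D_r)$ to conclude $\tfrac{d}{ds}W|_{s=0}=0$, and notes that the bulk integral vanishes because $f$ is Willmore; the only remaining term is the boundary flux on $\partial D_r$, which is therefore zero with no asymptotics required. Your version is slightly heavier (you need the explicit $O(r)$ bound on the flux and the algebraic reorganization showing that $V^i\cdot Sf + 2H\cdot\partial_i(Sf)$ is divergence-free), but it has the advantage of separating cleanly into ``conservation'' and ``decay,'' which is the shape the argument must take when $f$ is only defined on an annulus (as in the later applications). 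One small point of convention: the paper sets $H=\Delta_g f$, so $\Delta_{\mathrm{flat}}f=e^{2u}H$ without the factor of $2$ you wrote; this does not affect your cancellation since $H\cdot SH=0$ either way.
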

\begin{rem}
	This result is not new. For a clear presentation of how such identities can be obtained via Noether's theorem, we refer to Bernard's paper \cite{B}. Here we have put it in a form convenient for us and include a proof for completeness.
\end{rem}

\begin{proof}
Let $f_s:D\rightarrow\R^n (s\in (-\epsilon,\epsilon))$ be a smooth variation with $f_0=f$ and $\dot{f_s}|_{s=0}=\varphi$. 
Denote the pullback metric $f_s^*(g_{\Real^n})$ by $g_s$ and the mean curvature vector $\Delta_{g_s} f_s$ by $H_s$. We omit the subscript $s$ when $s=0$.

For any subdomain $\Omega\subset D$, direct computation shows
\begin{eqnarray*}
&&\frac{d}{ds}W(f_s,\Omega)|_{s=0} \\
&=&\frac{d}{ds}\int_{\Omega}(g_s^{ij}(f_s)_{ij}-\Gamma_{s,ij}^k\partial_k f_s)^2\sqrt{|g_s|}dx|_{s=0}\\
&=&2\int_{\Omega}H\cdot \Delta_g\varphi\sqrt{\abs{g}}dx
+2\int_{\Omega}H\cdot (\dot{g}_{s}^{ij}\partial^2_{ij}f-\dot{\Gamma}_{s,ij}^k\partial_k f)|_{s=0} \sqrt{\abs{g}} dx  +\int_{\Omega}|H|^2\frac{d}{ds}\sqrt{\abs{g_s}}|_{s=0}dx\\
&=&2\int_{\Omega}H\cdot \Delta_g\varphi dV_g-2\int_{\Omega}g^{ip}(\dot{g}_{s,pq}|_{s=0}) g^{jq} (\partial^2_{ij} f\cdot H)  dV_g  +\int_\Omega|H|^2\nabla_gf\cdot \nabla_g\varphi dV_g\\
&=&2\int_{\Omega}H\cdot \Delta_g\varphi dV_g-4\int_{\Omega}H\cdot A_{ij}g^{ip}g^{jq}(\partial_p f\cdot \partial_q \varphi) dV_g
+\int_\Omega|H|^2\nabla_gf\cdot \nabla_g\varphi dV_g\\
&=&2\int_{\Omega}H\cdot \Delta \varphi dx -4\int_{\Omega}H\cdot A_{ij}g^{ip}\delta^{jq}(\partial_p f \cdot \partial_q \varphi) dx +\int_\Omega|H|^2\nabla f \cdot \nabla \varphi dx.
\end{eqnarray*}
Here in the last line, we have used the conformal invariance and $\Delta$ and $\nabla$ are the Laplace and the gradient with respect to the Euclidean metric on $D$.

If $\Omega=D$ and $\varphi$ has compact support in $D$, then integration by parts shows that $f$ being a Willmore immersion satisfies the Euler-Lagrange equation
\begin{equation}
	\label{eqn:el}
2\Delta H + 4{\rm div}(H\cdot A_{pq}g^{ip}\partial_i f) -{\rm div}(|H|^2\nabla f)=0.
\end{equation}

Now, for any $S\in \mathfrak{so}(\Real^n)$ and $c\in \Real^n$, let $e^{sS}$ be the family of rotations generated by $S$ and consider a special variation given by
\[
	f_s= e^{s S} f+sc.
\]
Hence,
\[
	\varphi=\dot{f_s}|_{s=0}= Sf+{c}
\]
and
\[
	W(f_s,\Omega)=W(f,\Omega), \qquad \text{for} \quad s\in (-\epsilon,\epsilon),
\]
which implies that $\frac{d}{ds}W(f_s,\Omega)|_{s=0}=0$. Since $f$ satisfies \eqref{eqn:el}, the boundary term that is involved in the integration by parts when deriving \eqref{eqn:el} must vanish, in spite of the fact that $\varphi$ is not supported in $\Omega$. 

If we take $\Omega=D_r$ for $0<r<1$, we obtain
$$
2\int_{\partial D_r}(H\cdot \partial_r \varphi- \partial_r H\cdot \varphi) d\sigma - 4\int_{\partial D_r}(H\cdot A_{rq}) g^{iq}(\partial_i f\cdot \varphi)d\sigma + \int_{\partial D_r}|H|^2 \partial_r f\cdot \varphi d\sigma=0
$$
where $A_{rq}=A(\partial_r f, \partial_q f)$.

The proof is done by substituting $\varphi= Sf+c$ and translating $(r,\theta)$ coordinate to $(t,\theta)$ coordinate.
\end{proof}

\begin{rem}
	\label{rem:t}
When $f$ is a conformal and Willmore immersion from $[0,T]\times S^1$, we regard $f$ as a map from $D\setminus D_{e^{-T}}$. Taking $\Omega=D_{r_1}\setminus D_{r_2}$, we learn from the same computations above that the definition of $\tau_1$ and $\tau_2$ are independent of $t$.
\end{rem}

\section{3-circle lemma for Willmore energy}
\label{sec:3will}

For some $L>0$, we define for any $i\in \mathbb Z$
$$
Q_i=[(i-1)L,iL]\times S^1.
$$
Usually, for each $Q_i$, we attach some quantity $\Phi_i\geq 0$ (which is some integral over $Q_i$). The 3-circle lemma aims at showing that for each $q\in (0,2)$, there is $L_0(q)$ and if $L>L_0$, we have
\begin{equation}
	\label{eqn:qphi}
	\Phi_i\leq e^{-qL} (\Phi_{i-1}+\Phi_{i+1})
\end{equation}
holds for $i$ in a certain range.

The advantage of having \eqref{eqn:qphi} is that we can show for each $q'\in (0,q)$, the existence of $L_1(q,q')$ such that if $L>L_1$, we have
\[
	\text{either} \quad \Phi_{i-1}\geq e^{q'L} \Phi_i \qquad \text{or} \quad \Phi_{i+1}\geq e^{q' L} \Phi_i \\
\]
and
\begin{eqnarray*}
	\Phi_i \geq e^{q'L} \Phi_{i-1} & \Longrightarrow & \Phi_{i+1}\geq e^{q'L} \Phi_i \\
	\Phi_i \geq e^{q'L} \Phi_{i+1} & \Longrightarrow & \Phi_{i-1}\geq e^{q'L} \Phi_i.
\end{eqnarray*}
Hence \eqref{eqn:qphi} is very useful in proving exponential decay of certain quantity along a cylinder and $q'$ represents the speed of decay.

In this section, we will study the Willmore energy of a conformal Willmore immersion $f$ defined on 
\[
	Q=Q_1\cup Q_2\cup Q_3.
\]
The main result of this section is a 3-circle type lemma for the Willmore energy. More precisely,
\begin{thm}\label{3-circle.H}
	For any $m\in \mathbb Z_+$ and $q\in (0,2)$, there exist $L_0(m,q)>0$ and $\epsilon_0(m,q)>0$. If $L>L_0$, the following holds.
Let $f:Q\rightarrow \R^n$ be a conformal and Willmore immersion with $g=f^*(g_{\R^n})=e^{-2mt+2v}(dt^2+d\theta^2)$ for some smooth function $v$. If 
\begin{equation}
	\label{eqn:tau12}
\tau_1(f,c)=\tau_2(f,S)=0,\qquad \text{for any} \quad 	S\in \so, \s c\in \Real^n
\end{equation}
and
\begin{equation}
	\label{eqn:closeflat}
\int_{Q_i}|A|^2+\|\nabla v\|_{C^0(Q_i)}<\epsilon_0,\s {i=1,2,3},
\end{equation}
then
\begin{equation}
	\label{eqn:3CH}
\int_{Q_2} \abs{H}^2 dV_g \leq e^{-qL}\left( \int_{Q_1} \abs{H}^2 dV_g + \int_{Q_3} \abs{H}^2 dV_g \right).	
\end{equation}
\end{thm}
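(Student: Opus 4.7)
The strategy is to argue by contradiction, combining a direct Fourier 3-circle computation for harmonic functions with the Pohozaev-type conservation laws in order to eliminate the resonant modes that are not controlled by the harmonic 3-circle.

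I first write down the baseline harmonic 3-circle in the weighted metric. Any harmonic function on $Q$ expands as $u(t,\theta)=\sum_{j\in\mathbb Z}(A_j e^{|j|t}+B_j e^{-|j|t})e^{ij\theta}$, and the weighted energy $\int_{Q_i}|u|^2 e^{-2mt}\,dt\,d\theta$ decouples across distinct Fourier modes and within each mode splits into monomials of the form $e^{2\alpha t}$ with $\alpha\in\{|j|-m,\,-m,\,-(|j|+m)\}$. For each monomial the 3-circle inequality reduces to the elementary computation $\int_{Q_2}e^{2\alpha t}dt\leq e^{-q'L}(\int_{Q_1}+\int_{Q_3})e^{2\alpha t}dt$, which holds whenever $q'<2|\alpha|$ and $L$ exceeds some $L_0(q',\alpha)$. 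In the present setting $|\alpha|\geq 1$ for every exponent that appears, with the sole exception of the resonant exponent $\alpha=|j|-m=0$, attained by the positive-exponent coefficient $A_{\pm m}$ of the $|j|=m$ Fourier mode. Consequently, upon picking $q'\in (q,2)$, the baseline 3-circle at rate $q'$ holds on any harmonic $u$ whose $A_{\pm m}$ coefficients vanish, provided $L\geq L_0(q',m)$.

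Assuming the theorem fails, I take $\epsilon_k\to 0$, $L_k\geq L_0$ (possibly with $L_k\to\infty$), and conformal Willmore immersions $f_k$ on $Q^{(k)}=[0,3L_k]\times S^1$ satisfying (a)--(c) with parameter $\epsilon_k$ but violating \eqref{eqn:3CH}. The inequality $|H|^2\leq C|A|^2$ valid in dimension two, together with (c), makes the total Willmore energy $\Lambda_k^2:=\int_{Q^{(k)}}|H_k|^2 dV_{g_k}$ tend to zero; I normalize $\hat H_k := H_k/\Lambda_k$. The Willmore equation \eqref{eqn:el}, rewritten for $\hat H_k$ and combined with (b)--(c) together with $\epsilon$-regularity for Willmore immersions (which upgrades (c) to an $L^\infty$ smallness for $|A_k|$), shows that $\hat H_k$ is approximately harmonic in the flat Laplacian, with error tending to zero in $L^2_{\mathrm{loc}}$. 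At the same time, translating and rotating each $f_k$ by a Euclidean motion (which preserves $\tau_1, \tau_2$), the immersions $f_k$ converge on every compact subcylinder to a smooth flat conformal immersion of the metric $e^{-2mt}(dt^2+d\theta^2)$; classifying such immersions yields, after one more rotation,
\[
f_\infty(t,\theta)=\frac{1}{m}e^{-mt}(\cos m\theta,\sin m\theta,0,\ldots,0),
\]
and by elliptic regularity $\hat H_k \to \hat H_\infty$ to a harmonic limit.

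The main obstacle is to show that the resonant $A_{\pm m}$ coefficients of $\hat H_\infty$ vanish. Dividing the conservation law $\tau_2(f_k,S)=0$ by $\Lambda_k$ and passing to the limit yields, for every $t$ and every $S\in\mathfrak{so}(\R^n)$,
\[
\int_{S^1}\bigl(\hat H_\infty\cdot\partial_t(Sf_\infty)-\partial_t\hat H_\infty\cdot Sf_\infty\bigr)\,d\theta = 0.
\]
Taking $S$ to be the elementary rotation in the $(1,\ell)$- or $(2,\ell)$-plane for each $\ell \geq 3$ and inserting the explicit form of $f_\infty$, the $\theta$-integral picks out precisely the $|j|=m$ Fourier coefficients of $\hat H_{\infty,\ell}$, and a short computation forces their $e^{mt}$ parts to vanish. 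Normality of $\hat H_\infty$ to $f_\infty$ makes $\hat H_{\infty,1}$ and $\hat H_{\infty,2}$ vanish automatically, so all resonant coefficients of $\hat H_\infty$ are zero. The baseline 3-circle at rate $q'$ then applies to $\hat H_\infty$, contradicting the limit of the violating inequality since $q<q'$. A subsidiary technical point, when $L_k\to\infty$, is to localize the limit arguments near the boundaries $t=(i-1)L_k$, where the weight $e^{-2mt}$ concentrates the energy onto subcylinders of bounded size.
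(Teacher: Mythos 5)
Your proposal follows essentially the same strategy as the paper's own proof: argue by contradiction, normalize $H_k$ and pass to a harmonic limit on the cylinder (after recentering and rotating so the blow-up converges to the model $f_\infty=\tfrac{e^{-mt}}{m}(\cos m\theta,\sin m\theta,0,\dots,0)$), use the conservation law $\tau_2$ to kill the resonant $e^{mt}\cos m\theta$, $e^{mt}\sin m\theta$ coefficients, and conclude with the 3-circle lemma for harmonic functions in the weighted $L^2$ norm. One small imprecision worth noting: translations do \emph{not} preserve $\tau_2(\cdot,S)$ in general (cf.\ Remark \ref{rem:tau1}), so the recentering step implicitly uses the hypothesis $\tau_1\equiv 0$ -- this is precisely why $\tau_1=0$ must appear among the assumptions even though only $\tau_2$ enters the final computation with the limit.
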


\subsection{Harmonic functions}

The assumption \eqref{eqn:closeflat} implies that the mean curvature is approximately a harmonic function. We need the following 3-circle property of a harmonic function.

%
%
%
%
\begin{lem}\label{3-circle.harmonic}
	For each $m\in \mathbb Z_+$ and $q\in (0,2)$, there is $L_0(m,q)>0$ such that if $L>L_0(m,q)$ then the following holds.
Assume that $u\neq 0$ is a harmonic function defined on $Q$. The following expansion is well known
\begin{equation}
	\label{eqn:exp.harmonic}
u=a+bt+\sum_{k=1}^\infty\left((a_ke^{-kt}+b_ke^{kt})\cos k\theta+  (a_k'e^{-kt}+b_{k}'e^{kt})\sin k\theta\right).
\end{equation}
If $b_m=b_m'=0$,
then 
\begin{equation}
	\label{eqn:3C.harmonic}
\int_{Q_2}|u|^2e^{-2mt}dtd\theta < e^{-qL} \left( \int_{Q_1} \abs{u}^2 e^{-2mt} dtd\theta + \int_{Q_3} \abs{u}^2 e^{-2mt} dtd\theta \right).
\end{equation}
\end{lem}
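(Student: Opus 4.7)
The plan is to exploit the explicit Fourier expansion \eqref{eqn:exp.harmonic} and reduce the estimate to a mode-by-mode analysis of one-variable integrals. By orthogonality of $\{1,\cos k\theta,\sin k\theta\}$ on $S^1$,
\[
\int_{Q_i}|u|^2 e^{-2mt}\,dt\,d\theta = 2\pi\!\int_{(i-1)L}^{iL}\!(a+bt)^2 e^{-2mt}\,dt + \pi\sum_{k=1}^\infty\!\int_{(i-1)L}^{iL}\!\bigl(\phi_k^2+\tilde\phi_k^{\,2}\bigr)e^{-2mt}\,dt,
\]
where $\phi_k(t)=a_ke^{-kt}+b_ke^{kt}$ and $\tilde\phi_k(t)=a_k'e^{-kt}+b_k'e^{kt}$. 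The cosine and sine contributions are structurally identical, so it suffices to prove a 3-circle inequality, with a common constant $e^{-qL}$, for each single function of the form $\phi_k$ (and for the $\theta$-independent mode $(a+bt)$).

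The role of the hypothesis $b_m=b_m'=0$ is transparent at the resonant mode $k=m$: with $b_m=0$, $\phi_m(t)=a_m e^{-mt}$ and hence $\phi_m(t)^2 e^{-2mt}=a_m^2 e^{-4mt}$, a pure exponential of rate $-4m$, for which a direct computation gives
\[
\frac{\int_L^{2L}\!e^{-4mt}\,dt}{\int_0^L\!e^{-4mt}\,dt + \int_{2L}^{3L}\!e^{-4mt}\,dt}=\frac{e^{-4mL}}{1+e^{-8mL}}\le e^{-qL}
\]
for any $q<4m$. Without this hypothesis, expanding $\phi_m^2 e^{-2mt}$ would produce a constant-in-$t$ term $b_m^2$ whose integrals over $Q_1,Q_2,Q_3$ are all equal to $b_m^2 L$, giving a ratio of $1/2$ regardless of $L$; this is the unique obstruction, and the hypothesis is designed precisely to eliminate it.

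For the mode $k=0$, $(a+bt)^2 e^{-2mt}$ is polynomial-times-exponential with exponential rate $2m\ge 2$, and direct estimation bounds the 3-circle ratio by $CL^2 e^{-2mL}\le e^{-qL}$ once $L\ge L_0(m,q)$. For the generic case $k\ge 1$, $k\ne m$, I change variables by $\psi_k(t):=\phi_k(t)e^{-mt}=a_k e^{-(k+m)t}+b_k e^{(k-m)t}$; both exponents are nonzero. The sought estimate for $I_i=\int_{(i-1)L}^{iL}\psi_k^2\,dt$ is a $2\times 2$ matrix inequality: the symmetric matrix $(M_1+M_3)-e^{qL}M_2$ associated to the quadratic form in $(a_k,b_k)$ must be positive semidefinite. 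The identity $\sqrt{e^{-2(k+m)L}\cdot e^{2(k-m)L}}=e^{-2mL}$ yields $M_i=D^{i-1}M_1 D^{i-1}$ with $D=\mathrm{diag}(e^{-(k+m)L},e^{(k-m)L})$, reducing the positivity to an elementary discriminant check closed by the strict Cauchy--Schwarz bound $B_1^2<A_1 C_1$. The hard part is to make the threshold $L_0$ \emph{uniform} in $k$: the slowest modes are $k=m\pm 1$, where the worst exponent has absolute value $1$ and the single-exponential decay is only $e^{-2L}$; since $q<2$, one can pick $L_0(m,q)$ large enough to beat $e^{-qL}$ and to absorb the discriminant correction. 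For large $k$ both the decay rate and the Cauchy--Schwarz gap improve, so summability of the Fourier series is preserved.
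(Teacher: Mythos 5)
The Fourier-mode decomposition is the same as the paper's, and your treatment of the $k=0$ mode (polynomial$\times$exponential, rate $e^{-2mL}$) and of the resonant $k=m$ mode (pure $e^{-4mL}$ once $b_m=b_m'=0$) is correct. Your reformulation of the generic mode $k\ne 0,m$ as a $2\times 2$ positive-semidefiniteness condition $(M_1+M_3)-e^{qL}M_2\succeq 0$ with $M_i=D^{i-1}M_1D^{i-1}$, $D=\mathrm{diag}(e^{-(k+m)L},e^{(k-m)L})$, is a genuinely different organization from the paper's direct case-by-case estimate of each $B_{2;k}$ in terms of $B_{1;k},B_{3;k}$; the two approaches ultimately rest on the same Cauchy--Schwarz gap, but yours packages it as a single discriminant.

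However, the closing sentence ``\ldots reducing the positivity to an elementary discriminant check closed by the strict Cauchy--Schwarz bound $B_1^2<A_1C_1$'' is not an adequate justification, and this is where the real work of the lemma lies. Writing $\alpha_1=1+d_1^4-e^{qL}d_1^2$, $\alpha_2=1+d_2^4-e^{qL}d_2^2$, $\beta=1+d_1^2d_2^2-e^{qL}d_1d_2$ (so the determinant is $A_1C_1\,\alpha_1\alpha_2-B_1^2\,\beta^2$), one finds that for $1\le k<m$ the scalar factor is actually \emph{negative} for large $L$:
\[
\alpha_1\alpha_2-\beta^2 \;\approx\; -\,e^{(q-2m)L}\bigl(e^{kL}-e^{-kL}\bigr)^2 \;<\;0 ,
\]
so $\alpha_1\alpha_2<\beta^2$. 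Hence the bare inequality $B_1^2<A_1C_1$ does not by itself make the determinant nonnegative; you must use the quantitative facts that $B_1^2/(A_1C_1)\to 1-k^2/m^2$ as $L\to\infty$ while $\alpha_1\alpha_2/\beta^2\ge 1-O\!\left(e^{(q-2(m-k))L}\right)\to 1$, and then choose $L_0(m,q)$ so that the (exponentially small, since $q<2$) deficit in $\alpha_1\alpha_2/\beta^2$ is beaten by the fixed gap $k^2/m^2\ge 1/m^2$ for all finitely many $k\in\{1,\dots,m-1\}$. Your phrase ``absorb the discriminant correction'' gestures at this, but you should spell out that for $k<m$ the correction changes sign and the quantitative Cauchy--Schwarz limit is what saves the argument; as written, a reader would reasonably believe the discriminant positivity is automatic, which it is not. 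Once this computation is supplied, your proof is complete and parallels the paper's Case 1 estimate by a different route.
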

\proof
By a direct calculation, we have (for $i=1,2,3$)
\begin{eqnarray*}
&&\|ue^{-mt}\|^{2}_{L^{2}(Q_i)}\\
&=&2\pi\dint_{(i-1)L}^{iL}|a+bt|^{2}e^{-2mt}dt\\
&&+\pi\int_{(i-1)L}^{iL}\sum_{k=1}^{+\infty}|a_k e^{-kt}+b_k e^{k t}|^{2}e^{-2mt}dt
+\pi\int_{(i-1)L}^{iL}\sum_{k=1}^{+\infty}|a_k' e^{-kt}+b_k' e^{kt}|^{2}e^{-2mt}dt\\
&=&2\pi\int_{(i-1)L}^{iL}|a+bt|^{2}e^{-2mt}dt\\
&&+\frac{\pi}{2}\sum_{k=1}^\infty{\Big(
}|a_k|^2\frac{1-e^{-2(k+m) L}}{(k+m)}e^{-2(k+m)(i-1)L}
+|b_k|^2\frac{e^{2(k-m) L}-1}{k-m}e^{2(k-m)(i-1)L}+\\
&&\s\s\s\s\s2\frac{a_k b_k}{m}(1-e^{-2mL})e^{-2m(i-1)L}{\Big)}\\
&&+\frac{\pi}{2}\sum\limits_{k=1}^\infty{\Big(
}{|a_k'|}^2\frac{1-e^{-2(k+m) L}}{(k+m)}e^{-2(k+m)(i-1)L}
+{|b_k'|}^2\frac{e^{2(k-m) L}-1}{k-m}e^{(k-m)(i-1)L}\\
&&\s\s\s\s\s2\frac{a_k' b_k'}{m}(1-e^{-2mL})e^{-2m(i-1)L}{\Big)}.\\
\end{eqnarray*}
In the above computation, when $k=m$, the expression $\frac{e^{2(k-m)L}-1}{k-m}$ does not make sense. However, since we have assumed $b_m=b'_m=0$, this is not a problem.

By setting $C_k, D_k, E_k$ and $C'_k, D'_k, E'_k$ properly, we can rewrite the last line in the above equation
\begin{eqnarray*}
	\norm{u e^{-mt}}_{L^2(Q_i)}^2
&:=& A_i + \sum_{k=1} \left( C_k e^{-2(k+m)(i-1)L} + D_k e^{2(k-m)(i-1)L} + E_k e^{-2m(i-1)L} \right)\\
&& + \sum_{k=1} \left( C'_k e^{-2(k+m)(i-1)L} + D'_k e^{2(k-m)(i-1)L} + E'_k e^{-2m(i-1)L} \right)\\
&:=& A_i + \sum_{k=1}^\infty B_{i;k} + \sum_{k=1}^\infty  B'_{i;k} \\
&:=&A_i+B_i+B_i'.
\end{eqnarray*}

{\bf Step 1. } Claim: there exists $C(m)>0$ such that
\begin{equation}
	\label{eqn:Bk}
B_{2;k}\leq C(m)e^{-2L}(B_{1;k}+B_{3;k}).
\end{equation}
Summing over $k$, we obtain
\begin{equation}
	\label{eqn:B}
B_{2}\leq C(m)e^{-2L}(B_{1}+B_{3}).
\end{equation}
The proof of the claim is divided into several cases.

{\noindent\it Case 1:} $k<m$.
By $k\geq 1$ and the Schwarz inequality,
	\begin{eqnarray*}
		\abs{\frac{2a_kb_k}{m}} &\leq& \frac{\sqrt{m^2-1}}{m} \abs{2\frac{a_k}{\sqrt{m+k}}\frac{b_k}{\sqrt{m-k}}}\\
					&\leq& \frac{\sqrt{m^2-1}}{m} \left( \frac{\abs{a_k}^2}{m+k} + \frac{\abs{b_k}^2}{m-k} \right).
	\end{eqnarray*}
Then
$$
(1-e^{-2mL})\abs{\frac{2a_kb_k}{m}}\leq \frac{\sqrt{m^2-1}}{m} \left( (1-e^{-2(k+m)L})\frac{\abs{a_k}^2}{m+k} + \frac{1-e^{-2mL}}{1-e^{2(k-m)L}}(1-e^{2(k-m)L})\frac{\abs{b_k}^2}{m-k} \right).
$$
	By the definition of $C_k,D_k$ and $E_k$, for $L$ sufficiently large, we have $c(m)\in (0,1)$ such that
	\[
		\abs{E_k}\leq (1-c(m)) (C_k+D_k),
	\]
which implies
$$
C_k+D_k+E_k= c(m)(C_k+D_k)+(1-c(m))(C_k+D_k)+E_k\geq c(m)(C_k+D_k).
$$	
Hence, 
	\begin{eqnarray*}
		B_{2;k}&=& C_ke^{-2(k+m)L}+D_ke^{2(k-m)L}+E_ke^{-2mL} \\
		       &\leq& e^{-2L}(C_k+D_k) + (1-c(m))e^{-2L}(C_k+D_k)\\
		       &\leq& 2 e^{-2L}(C_k+D_k) \\
		       &\leq& C(m) e^{-2L}(C_k+D_k+E_k)\\
		       &=& C(m) e^{-2L} B_{1;k}.
	\end{eqnarray*}
	Here $C(m)$ is a constant depending only on $m$ that may vary from line to line.

{\noindent\it {Case 2:}} $k=m$.

Note that $D_k=E_k=0$. We have
\begin{equation*}
B_{2;k}=e^{-2(k+m)L} C_k\leq e^{-2L}C_k=e^{-2L}B_{1;k}.
\end{equation*}

{\noindent\it Case 3:} $k>m$ and $E_k\geq 0$.

When $E_k\geq 0$, we know $D_k e^{2(k-m)(2L)}\leq B_{3;k}$, which implies that
\begin{eqnarray*}
B_{2;k}&=&C_ke^{-2(k+m)L}+D_ke^{2(k-m)L}+E_ke^{-2mL}\\
&=&C_ke^{-2(k+m)L}+e^{-2(k-m)L}D_ke^{4(k-m)L}+E_ke^{-2mL}\\
&\leq&e^{-2L}(C_k+E_k)+e^{-2L}(D_ke^{2(k-m)L\cdot2})\\
&\leq& e^{-2L}(B_1^k+B_3^k).
\end{eqnarray*}

{\noindent\it Case 4:} $k>m$  and $E_k<0$.

Choose $L_0$ such that for $L>L_0$,
$$
2e^{-2(k+m)L}\leq 2e^{-2L -2 mL}<e^{-2mL} \quad \text{and} \quad 2e^{-2(k-m)L-4mL}< e^{-2mL}.
$$
Together with $E_k<0$, we obtain
$$
(e^{-2(k+m)L}+e^{-2(k-m)L-4mL})E_k>e^{-2mL}E_k,
$$
which allows us to estimate
\begin{eqnarray*}
B_{2;k}&=&C_ke^{-2(k+m)L}+D_ke^{2(k-m)L}+E_ke^{-2mL}\\
&\leq&
e^{-2(k+m)L}(C_k+E_k)
+e^{-2(k-m)L}(D_ke^{2(k-m)\cdot2L}
+E_ke^{-2m\cdot2L})\\
&\leq&
e^{-2(k+m)L}(C_k+D_k+E_k)+e^{-2(k-m)L}(C_ke^{-(k+m)\cdot 2L}+D_ke^{2(k-m)\cdot2L}
+E_ke^{-2m\cdot2L})\\
&=& e^{-2(k+m)L}B_{1;k}+e^{-2(k-m)L}B_{3;k}\\
&\leq&e^{-2L}(B_{1;k}+B_{3;k}).
\end{eqnarray*}
Here in the last line above we used the fact that $B_{i;k}$ is nonnegative, which is obvious if we go back to its definition.

{\noindent\bf Step 2.} We show that for any $q<2$, we can choose $L_0$,  such that for any $L>L_0$, there holds
\begin{equation}
	\label{eqn:step2}
A_2\leq e^{-qL}(A_1+A_3).
\end{equation}

Setting
\begin{eqnarray*}
c_1&=&|a|^2+\frac{ab}{m}+\frac{|b|^2}{2m^2}=|a+\frac{b}{2m}|^2+\frac{|b|^2}{4m^2} \\
c_2&=&\frac{|b|^2}{m}+2ab \\
c_3&=&|b|^2,
\end{eqnarray*}
and
\[
	F(t)= \frac{e^{-2mt}}{2m} (c_1+c_2 t +c_3 t^2),
\]
we verify that $-F(t)$ is a primitive function of $(a+bt)^2 e^{-2mt}$ such that
\begin{eqnarray*}
	A_1&=&F(0)-F(L)\\
	A_2&=&F(L)-F(2L)\\
	A_3&=&F(2L)-F(3L).
\end{eqnarray*}
It is easy to check the following easy facts
\begin{enumerate}
	\item $c_1+c_2 t+c_3 t^2 \geq 0$ for any $t\in \Real$, hence $F(t)\geq 0$ for all $t\in \Real$;
	\item $c_1\geq 0$;
	\item $c_2L \leq 2m c_1 L$;
	\item $c_3L^2 \leq 4m^2 c_1 L^2$. 
\end{enumerate}
Using these properties, we have
\begin{eqnarray*}
	F(L)&=& \frac{e^{-2mL}}{2m}(c_1+c_2L+c_3L^2) \leq C(m) e^{-2mL} L^2 c_1 \leq C(m) L^2 e^{-2L} F(0)\\
	F(3L)&=&\frac{e^{-6mL}}{2m}(c_1+3c_2L+9c_3L^2) \leq C(m) e^{-2mL} L^2 c_1 \leq C(m) L^2 e^{-2L} F(0).
\end{eqnarray*}
Hence for a fixed $q<2$, we may choose $L_0$ such that for any $L>L_0$, we have
$$
(1+e^{-qL})F(L)\leq\frac{e^{-qL}}{2}F(0)
,\s e^{-qL}F(3L)\leq \frac{e^{-qL}}{2}F(0).
$$
Taking the sum of the above inequalities and noticing that $F(2L)\geq 0$, we get
$$
(1+e^{-qL})F(L)+e^{-qL}F(3L)\leq e^{-qL}F(0)\leq (1+e^{-qL})F(2L)+e^{-qL}F(0),
$$
which implies that
$$
F(L)-F(2L)\leq e^{-qL}(F(0)-F(L)+F(2L)-F(3L)).
$$
The proof for \eqref{eqn:step2} is done.

Finally, by the same argument as in the Step 1, we also have
\[
	B_2'\leq C(m) e^{-2L} (B_1'+B_3').
\]
Together with \eqref{eqn:B}, we can choose $L_0$ such that if $L>L_0$, we have
\[
	B_2+B_2'\leq e^{-qL}(B_1+B_1'+B_3+B_3').
\]
and the inequality can be made strict as long as the right hand side is not zero.
For fixed $q\in (0,2)$, we use Step 2 for $q'$ (satisfying $q<q'<2$) to see 
\[
	A_2\leq e^{-qL}(A_1+A_3)
\]
and again the inequality can be made strict as long as $A_1+A_3$ is not zero.
Since we have assumed that $u$ is not identically zero, at least one of the above inequalities are strict. The proof of \eqref{eqn:3C.harmonic} is done by adding them up.
\endproof

\subsection{Proof of Theorem \ref{3-circle.H}}

\proof 
Given $m$ and $q$, let $L_0$ be determined by Lemma \ref{3-circle.harmonic}. Argue by contradiction. Assume that there exist $L>L_0$ and a sequence of $f_k$  with $g_k=f_k^*(g_{\Real^n})=e^{-2mt+2v_k}(dt^2+d\theta^2)$ satisfying
\begin{equation}
	\label{eqn:smallAv}
	\int_Q \abs{A_k}^2 dV_{g_k}  + \norm{\nabla v_k}_{C^0(Q)} \to 0
\end{equation}
but
\begin{equation*}
\int_{Q_2} \abs{H_k}^2 dV_{g_k} > e^{-qL}\left( \int_{Q_1} \abs{H_k}^2 dV_{g_k} + \int_{Q_3} \abs{H_k}^2 dV_{g_k} \right),
\end{equation*}
that is,
\begin{equation}\label{3.W}
\int_{Q_2}|H_k|^2e^{-2mt+2v_k}dtd\theta > e^{-q L} \left( \int_{Q_1}|H_k|^2e^{-2mt+2v_k}dtd\theta + \int_{Q_3}|H_k|^2e^{-2mt+2v_k}dtd\theta \right)
\end{equation}
for $k$ sufficiently large.


After a scaling for each $f_k$, by \eqref{eqn:smallAv}, we assume that $v_k$ converges to zero uniformly on $Q$. We still denote the scaled maps by $f_k$. This will not cause any problem because every assumption and the conclusion of the theorem is invariant after the scaling.

The boundedness of $v_k$ on $Q$ allows us to use the $\epsilon$-regularity estimate (\cite[Theorem I.1]{B-R}) to control any derivative of $f_k$ on any compact set of $Q$. Hence, up to translation and taking subsequence, we may assume that $f_k$ converges locally smoothly to a conformal Willmore immersion $f_\infty $. By \eqref{eqn:smallAv}, the second fundamental form of $f_\infty$ vanishes and the induced metric $g_\infty = e^{-2mt}(dt^2+d\theta^2)$. The vanishing of the second fundamental form implies that the image of $f_\infty $ lies in a plane and after a rotation of $\Real^n$ (which may flip the orientation if necessary), we may assume that $f_\infty (t,\theta)=(u(t,\theta),v(t,\theta),0,...,0)$. Since $f_\infty $ is conformal, the explict formula of $g_\infty $ implies that
\[
	\abs{\partial_t u}^2+ \abs{\partial_t v}^2 = \abs{\partial_\theta u}^2+ \abs{\partial_\theta v}^2= e^{-2mt} \quad \text{and} \quad \partial_t u \cdot \partial_\theta u +\partial_t v\cdot \partial_\theta v=0.
\]
It follows from elementary complex analysis that after a rotation of the domain (i.e. $\theta\mapsto \theta+ \theta_0$) if necessary and a translation in $\Real^n$, $f_\infty $ is uniquely determined
\[
	f_\infty (t,\theta)= \frac{e^{-mt}}{m} (\cos m\theta, \sin m\theta, 0, \cdots ,0).
\]

We observe that all assumptions and the conclusion of Theorem \ref{3-circle.H} is invariant under scaling, translation and rotation. Hence, we may assume that $f_k$ converges to $f_\infty $ locally smoothly in $Q$.

\begin{rem}
	\label{rem:tau1}
	It is important to note that $\tau_1(f,c)=0$ (for all $c\in \Real^n$) is invariant under scaling, translation and rotation, while $\tau_2(f,S)=0$ (for all $S\in \so$) is invariant under scaling and rotation, but not translation. However, if $\tau_1(f,c)=0$ is assumed, then $\tau_2(f,S)=0$ becomes invariant under translation. These facts can be verified directly using the definitions.
	It will be clear that for the rest of the proof only $\tau_2(f,S)=0$ is used. However, we still need to assume $\tau_1(f,c)=0$ in the theorem to ensure that $\tau_2(f,S)=0$ is invariant under translation.
\end{rem}

Set
$$
\phi_k=H_k/\| H_ke^{-mt}\|_{L^2(Q_2)}.
$$
Here the $L^2$ norm is taken with respect to the flat metric $dt^2+d\theta^2$.  By \eqref{3.W}, we have
\begin{equation}\label{3.W2}
\int_{Q_2}|\phi_k|^2e^{-2mt+2v_k}dtd\theta> e^{-q L} \left(\int_{Q_1}|\phi_k|^2e^{-2mt+2v_k}dtd\theta + \int_{Q_3}|\phi_k|^2e^{-2mt+2v_k}dtd\theta \right)
\end{equation}
which implies
\begin{equation}
	\label{eqn:l2}
\|\phi_ke^{-mt}\|_{L^2(Q)}<C.
\end{equation}
If we rewrite the equation of $H_k$ (see \eqref{eqn:el}) in the form of a linear equation (whose coefficients are allowed to depend on $H_k$, $A_k$ and their derivatives), we obtain the following equation for $\phi_k$
\begin{equation}\label{equ.H}
\Delta\phi_k+\alpha_k\cdot\nabla\phi_k+\beta_k\phi_k=0.
\end{equation}
Thanks to \eqref{eqn:smallAv}, we have
\[
\|\alpha_k\|_{C^l(K)}+\|\beta_k\|_{C^l(K)}\rightarrow 0
\]
for any $l\in \mathbb N$ and any compact subset $K \subset Q$. The equation \eqref{equ.H} and the $L^2$ bound \eqref{eqn:l2} imply any $C^l$ estimates of $\phi_k$ on $K$. By taking subsequence, we assume $\phi_k$ converges in $C^\infty_{loc}(Q)$ to some harmonic function denoted by $\phi$. By our definition of $\phi_k$,
$$
\|\phi e^{-mt}\|_{L^2(Q_2)}=1.
$$
By the Fatou's lemma and \eqref{eqn:smallAv}, \eqref{3.W2}, we have
\begin{equation}\label{3.W3}
\int_{Q_2}|\phi|^2e^{-2mt}dtd\theta\geq e^{-q L}
\left(\int_{Q_1}|\phi|^2e^{-2mt}dtd\theta+
\int_{Q_3}|\phi|^2e^{-2mt}dtd\theta\right).
\end{equation}
In order to get a contradiction to the three circle property given by Lemma \ref{3-circle.harmonic}, it suffices to show that $b_m=b'_m=0$ in the following expansion of harmonic function $\phi$
\[
\phi=a+bt+\sum_{k=1}^\infty\left((a_ke^{-kt}+b_ke^{kt})\cos k\theta+  (a_k'e^{-kt}+b_{k}'e^{kt})\sin k\theta\right).
\]

Since $H_k$ is a normal vector,
$$
\phi\cdot \partial_t (f_\infty )=\lim_{k\rightarrow+\infty}\frac{{H}_k\cdot (\partial_t f_k)}{\| H_ke^{-mt}\|_{L^2(Q_2)}}=0.
$$
Similarly, we have $\phi\cdot \partial_\theta(f_\infty )=0$. Hence, we may assume
$$
\phi=(0,0,\phi_3,\cdots,\phi_n).
$$
Taking the limit $k\to \infty $, we get from $\tau_2(f_k,S)=0$ that
\begin{equation}\label{eqn:limit}
0=\int_{\{t\}\times S^1}\left(\phi\cdot \partial_t (Sf_\infty )-\partial_t \phi\cdot (Sf_\infty )\right) d\theta.
\end{equation}
Letting $S=(s_{ij})$ with 
$$
s_{ij}=\left\{\begin{array}{ll}1&i=1,j=n\\
              -1&i=n,j=1\\
              0&others,
             \end{array}\right.
$$ 
we obtain from \eqref{eqn:limit} that
\[
	0 = \int_{\set{t} \times S^1} \phi_n (m e^{-mt}\cos m\theta) - \partial_t \phi_n (-e^{-mt}\cos m\theta) d\theta.	
\]
This implies that $b_m=0$ for the expansion of $\phi_n$. 
By choosing different $S$, the same argument shows that $b_m=b'_m=0$ for the expansion of $\phi_3,\cdots ,\phi_n$. This concludes the proof of Theorem \ref{3-circle.H}.
\endproof

Consequently, following the discussion at the beginning of Section \ref{sec:3will}, we have the following
\begin{cor}\label{pro:decayH}
For any $q\in (0,2)$ and $m\in \mathbb Z_+$,  there exist $L_0(m,q),\delta_0(m,q)>0$, such that if $L>L_0$ then the following holds.
	Let $f:[0,kL]\times S^1\rightarrow\R^n$ be a conformal and Willmore immersion with $f^*(g_{\R^n})=e^{-2mt+2v}(dt^2+d\theta^2)$. Assume $\tau_1(f,S)=0$ and $\tau_2(f,c)=0$ for any $S\in \so$ and $c\in\R$. If
$$
\|\nabla v\|_{L^\infty(Q_i)}+\int_{Q_i}|A|^2 dV_g<\delta_0,\qquad \forall\s 1\leq i\leq k, 
$$
then we have
\begin{equation}\label{decay.H.ine}
\int_{Q_i}|H|^2dV_{g}<C(e^{-iqL}\int_{Q_1}|H|^2dV_{g}+e^{-(k-i)qL}\int_{Q_k}|H|^2dV_{g}).
\end{equation}
\end{cor}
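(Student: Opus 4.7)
The plan is to upgrade the single 3-circle inequality of Theorem \ref{3-circle.H} to the two-sided exponential decay \eqref{decay.H.ine} by a barrier comparison, following the template described in the paragraph preceding Theorem \ref{3-circle.H}. Write $\Phi_j := \int_{Q_j} |H|^2\, dV_g$ for $1\le j\le k$. The first step is to apply Theorem \ref{3-circle.H} to each sliding window $Q_{j-1}\cup Q_j\cup Q_{j+1}$, $2\le j\le k-1$. To this end I fix an auxiliary $\tilde q\in (q,2)$ and take $\delta_0 := \epsilon_0(m,\tilde q)$ and $L_0$ at least as large as the one produced by Theorem \ref{3-circle.H}. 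The smallness assumed on each individual $Q_i$ passes to every such triple, and the conditions $\tau_1=\tau_2=0$ together with the form $e^{-2mt+2v}(dt^2+d\theta^2)$ of the metric are invariant under the $t$-translation that identifies the triple with $[0,3L]\times S^1$ (the constant arising from the shift can be absorbed into $v$ without changing $\nabla v$). Theorem \ref{3-circle.H} thus yields the discrete convexity
\[
\Phi_j \le e^{-\tilde q L}(\Phi_{j-1}+\Phi_{j+1}), \qquad 2\le j\le k-1.
\]

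Next, I set up the barrier $\Psi_j := \alpha\, e^{-jqL}+\beta\, e^{-(k-j)qL}$, which by a direct computation satisfies $\Psi_{j-1}+\Psi_{j+1}=(e^{qL}+e^{-qL})\Psi_j$. Because $\tilde q>q$, I may enlarge $L_0$ so that $e^{\tilde q L}\ge e^{qL}+e^{-qL}$ for $L>L_0$; this is exactly the reverse three-point inequality $\Psi_j \ge e^{-\tilde q L}(\Psi_{j-1}+\Psi_{j+1})$. Choose $\alpha = e^{qL}\Phi_1$ and $\beta = e^{qL}\Phi_k$ so that $\Psi_1\ge\Phi_1$ and $\Psi_k\ge\Phi_k$. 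A standard discrete maximum principle then forces $\chi_j:=\Phi_j-\Psi_j\le 0$: were $\chi$ to attain a positive maximum at some interior $j_0$, combining $\chi_{j_0\pm 1}\le \chi_{j_0}$ with the two opposite three-point inequalities would give
\[
(1-2e^{-\tilde q L})\,\chi_{j_0}\le \bigl[(e^{qL}+e^{-qL})e^{-\tilde q L}-1\bigr]\Psi_{j_0}\le 0,
\]
contradicting $\chi_{j_0}>0$ once $L$ is large. Thus $\Phi_j\le \Psi_j\le e^{qL}\bigl(\Phi_1 e^{-jqL}+\Phi_k e^{-(k-j)qL}\bigr)$, which is \eqref{decay.H.ine} with $C=e^{qL}$.

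I do not expect any serious obstacle here: this is the standard "3-circle $\Rightarrow$ two-sided exponential decay" step, and the only care required is the bookkeeping between the two exponents $q<\tilde q<2$, choosing $L_0$ both large enough for Theorem \ref{3-circle.H} to apply and large enough for the barrier comparison $e^{\tilde q L}\ge e^{qL}+e^{-qL}$ and the max-principle inequality $1-2e^{-\tilde q L}>0$ to hold simultaneously.
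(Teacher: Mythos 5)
Your proof is correct, and it takes a genuinely different route from the one the paper tacitly invokes. The paper derives the corollary by ``following the discussion at the beginning of Section \ref{sec:3will}'', which is a dichotomy/regime-propagation argument: from the discrete convexity $\Phi_j\le e^{-\tilde q L}(\Phi_{j-1}+\Phi_{j+1})$ one first shows that at every interior $j$ either $\Phi_{j-1}\ge e^{q'L}\Phi_j$ or $\Phi_{j+1}\ge e^{q'L}\Phi_j$, that the growing direction propagates outward, and hence that $\Phi$ is geometrically small away from a single transition index. Your barrier argument with $\Psi_j=e^{qL}\Phi_1e^{-jqL}+e^{qL}\Phi_ke^{-(k-j)qL}$ and the discrete maximum principle reaches the same conclusion by direct comparison with no case analysis, and it delivers the explicit constant $C=e^{qL}$, which depends only on $q$ and $L$ as required. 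It also handles the boundary/degenerate cases ($k\le 2$, or $\Phi_1=\Phi_k=0$) automatically. The two points you glossed over both go through: the vanishing of $\tau_1,\tau_2$ for the restriction of $f$ to each sliding window $Q_{j-1}\cup Q_j\cup Q_{j+1}$ is immediate from Remark \ref{rem:t}, since $\tau_1,\tau_2$ are $t$-independent circle integrals and hence unaffected by restricting the domain or by the $t$-shift; and, as you noted, under that shift the conformal factor picks up only an additive constant (absorbing $-mt_0$), so $\|\nabla v\|_{L^\infty}$ and the exponent $m$ are preserved, which is exactly what Theorem \ref{3-circle.H} needs.
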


\section{3-circle lemma for the second fundamental form}
\label{sec:A}

In this section, we prove Theorem \ref{thm:out}. It follows directly from the following theorem.
\begin{thm}\label{thm:main}
For any $q\in (0,2)$ and $m\in \mathbb Z_+$,  there exist $L_2(m,q),\delta_2(m,q)>0$, such that if $L>L_2$ then the following holds.
	Let $f:[0,kL]\times S^1\rightarrow\R^n$ be a conformal and Willmore immersion with $f^*(g_{\R^n})=e^{-2mt+2v}(dt^2+d\theta^2)$. Assume $\tau_1(f,S)=0$ and $\tau_2(f,c)=0$ for any $S\in \so$ and $c\in\R$. If
$$
\|\nabla v\|_{L^\infty(Q_i)}+\int_{Q_i}|A|^2 dV_g<\delta_2,\qquad \forall\s 1\leq i\leq k, 
$$
then we have the following decay estimate for the second fundamental form $A$
\begin{equation}
	\label{eqn:decayA}
\int_{Q_i}|A|^2dV_{g}<C\left(e^{-iqL}\int_{Q_1\cup Q_2\cup Q_3}|A|^2dV_{g}+e^{-(k-i)qL}\int_{Q_{k-2}\cup Q_{k-1}\cup Q_k}|A|^2dV_{g}\right),
\end{equation}
where $C$ depends on $m$, $q$, $L$.
\end{thm}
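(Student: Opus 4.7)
The plan is to reduce Theorem \ref{thm:main} to the 3-circle inequality
\[
\int_{Q_2}|A|^2\,dV_g \leq e^{-qL}\Bigl(\int_{Q_1}|A|^2\,dV_g + \int_{Q_3}|A|^2\,dV_g\Bigr)
\]
on any three consecutive cylinders $Q_1\cup Q_2\cup Q_3$ (provided $L>L_2(m,q)$, $\delta_2(m,q)$ is small, and the hypotheses hold). Given this inequality, the general iteration mechanism recalled at the beginning of Section \ref{sec:3will} immediately converts it into the claimed exponential decay \eqref{eqn:decayA}.

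The 3-circle inequality itself will be proved by contradiction, mimicking the blow-up argument used for Theorem \ref{3-circle.H}. Suppose a sequence $f_k:Q\to\Real^n$ of conformal Willmore immersions satisfies the hypotheses with
\[
\int_Q|A_k|^2\,dV_{g_k} + \|\nabla v_k\|_{C^0(Q)} \to 0
\]
but violates the 3-circle inequality, and set
\[
\sigma_k := \frac{\int_Q|H_k|^2\,dV_{g_k}}{\int_Q|A_k|^2\,dV_{g_k}} \in [0,2].
\]
If $\liminf_k\sigma_k>0$, then $|H_k|$ and $|A_k|$ are comparable in $L^2(Q)$. In this regime I combine two ingredients: (i) the 3-circle inequality for $\int_{Q_i}|H|^2\,dV_g$ from Theorem \ref{3-circle.H}, applied with a strictly better rate $q'\in(q,2)$, and (ii) a reverse elliptic estimate $\int_{Q_2}|A|^2\,dV_g \leq C\int_{Q_1\cup Q_2\cup Q_3}|H|^2\,dV_g$, obtained by testing the Gauss map equation $\Delta{\bf n}+A_{G(2,n)}(d{\bf n},d{\bf n})=\nabla^\perp H$ against a suitable cutoff and absorbing the quadratic term via the smallness of $\int_Q|A|^2$. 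Together these yield the 3-circle inequality for $|A|^2$ with rate $q$, contradicting the standing assumption. If instead $\sigma_k\to 0$, then $H_k$ is asymptotically negligible relative to $A_k$; after subtracting a constant from ${\bf n}_k$ and dividing by $(\int_Q|A_k|^2\,dV_{g_k})^{1/2}$, the rescaled Gauss maps converge to a nontrivial harmonic function $\tilde{\bf n}_\infty$ on $Q$ valued in a Euclidean space (the tangent space of $G(2,n)$ at the limit point of ${\bf n}_k$, which exists because $A_k\to 0$). The Pohozaev-type identity Lemma \ref{lem:poho.A}, obtained by passing the conservation laws $\tau_1(f_k,c)=\tau_2(f_k,S)=0$ to this limit, forces the tangential energy of $\tilde{\bf n}_\infty$ on each circle $\{t\}\times S^1$ to coincide with the radial energy; via the Fourier expansion used in Lemma \ref{3-circle.harmonic}, this balance kills the obstruction modes $b$ (linear in $t$) and $b_k,b'_k$ (exponentially growing), and the resulting 3-circle inequality for $\tilde{\bf n}_\infty$ contradicts the assumption.

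The main obstacle is the case $\sigma_k\to 0$, and in particular the derivation and exploitation of Lemma \ref{lem:poho.A}. The conservation laws $\tau_1(f,c)=\tau_2(f,S)=0$ are statements about the immersion $f$ rather than about the Gauss map ${\bf n}$, so one must trace carefully how they descend to ${\bf n}$ in the asymptotic regime $\sigma_k\to 0$. The rotation $S$ in $\tau_2$ must be chosen so that, in the limit where $f_k$ approaches the standard flat cylinder $f_\infty(t,\theta)=m^{-1}e^{-mt}(\cos m\theta,\sin m\theta,0,\dots,0)$, the identity reduces to a combination of $|\partial_t\tilde{\bf n}_\infty|^2$ and $|\partial_\theta\tilde{\bf n}_\infty|^2$ terms that encode the tangential-radial energy balance. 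A further subtle point is that the normalization step (subtracting a constant from ${\bf n}_k$) interacts with the failure of $\tau_2$ to be translation invariant noted in Remark \ref{rem:tau1}; the simultaneous validity of $\tau_1=0$ is what restores the invariance and allows the identity to pass cleanly to the limit.
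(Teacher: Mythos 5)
Your Case 2 argument (when $\sigma_k\to 0$) is essentially the content of the paper's Theorem \ref{3.circle.A} and is sound in outline, but your Case 1 argument (when $\liminf_k\sigma_k>0$) has a genuine gap, and as a result your proposed proof strategy---a universal 3-circle inequality for $\int_{Q_i}|A|^2\,dV_g$---does not go through. The ``reverse elliptic estimate'' $\int_{Q_2}|A|^2\,dV_g\leq C\int_{Q_1\cup Q_2\cup Q_3}|H|^2\,dV_g$ cannot be obtained by testing the Gauss-map equation against a cutoff: with a universal constant it is simply false (a long catenoid neck has $H\equiv 0$, $A\not\equiv 0$, small energy, bounded conformal gradient, and is Willmore), and a Caccioppoli-type test only relates $\int_{Q_2}|\nabla\mathbf n|^2$ to $\int_{Q_1\cup Q_3}|\mathbf n-c|^2$ plus source terms, with an $O(1)$ constant. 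With a $\sigma_0$-dependent constant your estimate is trivially true ($\int_{Q_2}|A|^2\leq\int_Q|A|^2\leq\sigma_0^{-1}\int_Q|H|^2$), but chaining it with the rate-$q'$ 3-circle for $H$ and the pointwise bound $|H|^2\leq 2|A|^2$ only yields
\begin{equation*}
\int_{Q_2}|A|^2\,dV_g\leq 2\sigma_0^{-1}(1+e^{-q'L})\left(\int_{Q_1}|A|^2\,dV_g+\int_{Q_3}|A|^2\,dV_g\right),
\end{equation*}
whose constant stays bounded away from zero as $L\to\infty$. Nothing in ingredients (i)+(ii) converts an $O(1)$ constant into the required $e^{-qL}$. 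More fundamentally, in the $\liminf\sigma_k>0$ regime the blow-up limit of the rescaled Gauss maps solves a genuine Poisson equation $\Delta\phi=c\,\nabla^\perp\psi$ with $c\neq0$, not Laplace's equation, and the Pohozaev bound of Lemma \ref{lem:poho.A} gives a right-hand side of size $\sigma_k^{1/2}|\nabla\phi_k|$, which is not small after normalization, so the balance $|\partial_t\phi|^2-|\partial_\theta\phi|^2=0$ is unavailable to control the harmonic part of $\phi$. An unconditional 3-circle for $\int_{Q_i}|A|^2\,dV_g$ under the hypotheses of Theorem \ref{thm:main} is neither claimed by the paper nor, so far as I can see, true.

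The paper instead runs a \emph{local} dichotomy at each position $i$ and never asserts a 3-circle for $A$ in general. If $\int_{Q_{i-1}\cup Q_i\cup Q_{i+1}}|A|^2\leq\delta^{-1}\int_{Q_{i-1}\cup Q_i\cup Q_{i+1}}|H|^2$ (so $H$ is not too small at $i$), the bound \eqref{eqn:decayA} at $i$ follows directly from Corollary \ref{pro:decayH} on the decay of $H$ (and the trivial inequality $|H|^2\leq 2|A|^2$ at the endpoints); otherwise Theorem \ref{3.circle.A} applies at $i$ with rate $q'>q$, which combined with $e^{-(q'-q)L}<1/2$ forces $\int_{Q_i}|A|^2<e^{-qL}\int_{Q_{i\pm1}}|A|^2$ for one of the signs, and one iterates in that direction until either the first alternative fires or the boundary is hit. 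You should replace your global $\sigma_k$-dichotomy with this iteration.

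Two secondary inaccuracies in your Case 2 write-up: Lemma \ref{lem:poho.A} is a direct pointwise computation in an adapted orthonormal frame, valid for \emph{any} conformal immersion; it is not derived by passing $\tau_1=\tau_2=0$ to a limit, and indeed $\tau_1,\tau_2$ play no role in Theorem \ref{3.circle.A}. And the interaction you describe between the normalization of ${\bf n}_k$ and Remark \ref{rem:tau1} conflates two unrelated operations: the remark concerns translations of $f$ in $\Real^n$, whereas subtracting $c_k$ from ${\bf n}_k$ is a normalization in $\Lambda^2\Real^n$ done purely for compactness of the rescaled sequence.
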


Assuming Theorem \ref{thm:main}, for $q,m$ as above, take $\delta_2$ and $L_2$ as given in Theorem \ref{thm:main} and fix some $L>L_2\geq 1$ such that $T=kL$ for some $k\in \mathbb Z_+$. Then (a)-(c) in Theorem \ref{thm:out} implies that all assumptions of Theorem \ref{thm:main} hold with $\delta_1=\delta_2/(2L)$. Since
\[
	\int_{Q_1\cup Q_2\cup Q_3} \abs{A}^2 dV_g, \int_{Q_{k-2}\cup Q_{k-1}\cup Q_k} \abs{A}^2 dV_g\leq C(L) \tilde{\delta},
\]
\eqref{eqn:decayaa} is a consequence of \eqref{eqn:decayA}. This proves Theorem \ref{thm:out}.

The proof of Theorem \ref{thm:main} consists of two parts. The first part shows that when $H$ is much smaller than $A$, the 3-circle holds for the $L^2$ norm of $A$ (Theorem \ref{3.circle.A}). The second part combines this with Corollary \ref{pro:decayH} to prove Theorem \ref{thm:main}.

\subsection{When $H$ is small.}
The Gauss map ${\bf n}$ takes value in the Grassmannian $G(2,n)$.
There is a natural embedding of $G(2,n)$ into $\Lambda^2(\Real^n)$, the $2$-nd exterior power of $\Real^n$. The induced metric from this embedding agrees with the metric as the symmetric space $O(n)/O(2) \times O(n-2)$. We denote the second fundamental form of $G(2,n)$ by $A_{G(2,n)}$. The main result in \cite{R-V} shows
\begin{equation}
	\label{eqn:RV}
	\tau({\bf n}):=\Delta {\bf n} + A_{G(2,n)}(d{\bf n},d{\bf n})= \nabla^{\perp} H,
\end{equation}
where $H$ is the mean curvature and $\nabla^\perp H$ is the covariant derivative of the normal bundle. The equation \eqref{eqn:RV} is to be understood with an isomorphism (see Section 2 of \cite{R-V}) that identifies $\nabla^{\perp} H$ with a section of ${\mathbf n}^{-1}(TG(2,n))$.

We regard \eqref{eqn:RV} as a perturbed harmonic map equation. In the neck analysis of harmonic maps, it is important to have some Pohozaev identity that relates the tangential energy of $\mathbf n$ with the radial energy. In our setting, this is achieved by the following lemma.

\begin{lem}
	\label{lem:poho.A} Let $f:(t_1,t_2)\times S^1\to \Real^n$ be any conformal immersion with $f^*(g_{\Real^n})=e^{2u}(dt^2+d\theta^2)$. Let ${\mathbf n},H,A$ be the Gauss map, mean curvature and second fundamental form of $f$ respectively. Then for some universal constant $C$,
\begin{equation}\label{Po.A}
	\left|\left|\partial_t {\bf n}\right|^2-\left|\partial_\theta {\bf n}\right|^2\right|\leq Ce^{2u}|A||H|, \qquad \forall (t,\theta)\in  (t_1,t_2)\times S^1.
\end{equation}
\end{lem}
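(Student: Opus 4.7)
The plan is to compute $|\partial_t \mathbf{n}|^2 - |\partial_\theta \mathbf{n}|^2$ pointwise in terms of the components of the second fundamental form and then use the trace relation between $A$ and $H$ to identify $|A_{tt}|^2 - |A_{\theta\theta}|^2$ as a product involving $H$.

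First, I would introduce the conformal orthonormal tangent frame
\[
	\tilde{e}_1 = e^{-u}\partial_t f, \qquad \tilde{e}_2 = e^{-u}\partial_\theta f,
\]
so that $\mathbf{n}=\tilde{e}_1\wedge\tilde{e}_2\in\Lambda^2(\Real^n)$ realizes the Gauss map. Differentiating,
\[
	\partial_i \mathbf{n} = \partial_i \tilde{e}_1\wedge \tilde{e}_2 + \tilde{e}_1\wedge \partial_i\tilde{e}_2 \qquad (i=t,\theta).
\]
Decomposing $\partial_i\tilde{e}_j$ into tangential and normal parts, the tangential contributions cancel: writing $(\partial_i\tilde{e}_1)^T=\omega_i\tilde{e}_2$ and $(\partial_i\tilde{e}_2)^T=-\omega_i\tilde{e}_1$ (orthonormality), those pieces give $\omega_i\tilde{e}_2\wedge\tilde{e}_2+\tilde{e}_1\wedge(-\omega_i\tilde{e}_1)=0$. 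A short computation using $\partial_t(e^{-u}\partial_t f)=-u_t\tilde{e}_1+e^{-u}\partial_{tt}^2 f$, etc., identifies the normal parts directly with components of $A$:
\[
	(\partial_t\tilde{e}_1)^\perp=e^{-u}A_{tt},\quad (\partial_t\tilde{e}_2)^\perp=(\partial_\theta\tilde{e}_1)^\perp=e^{-u}A_{t\theta},\quad (\partial_\theta\tilde{e}_2)^\perp=e^{-u}A_{\theta\theta}.
\]

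Next I would compute the norms using $|v\wedge w|^2=|v|^2|w|^2-\langle v,w\rangle^2$. Because $A_{ij}$ is normal to $\tilde{e}_1,\tilde{e}_2$ and the cross terms vanish (e.g.\ $\langle A_{tt}\wedge\tilde{e}_2,\tilde{e}_1\wedge A_{t\theta}\rangle=0$), one obtains the clean expressions
\[
	|\partial_t\mathbf{n}|^2 = e^{-2u}\bigl(|A_{tt}|^2+|A_{t\theta}|^2\bigr), \qquad |\partial_\theta\mathbf{n}|^2=e^{-2u}\bigl(|A_{t\theta}|^2+|A_{\theta\theta}|^2\bigr),
\]
so the mixed term cancels exactly in the difference:
\[
	|\partial_t\mathbf{n}|^2-|\partial_\theta\mathbf{n}|^2 = e^{-2u}\bigl(|A_{tt}|^2-|A_{\theta\theta}|^2\bigr)=e^{-2u}(A_{tt}-A_{\theta\theta})\cdot(A_{tt}+A_{\theta\theta}).
\]

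Finally I would close the estimate using the conformal relation $H=g^{ij}A_{ij}=e^{-2u}(A_{tt}+A_{\theta\theta})$, which gives $|A_{tt}+A_{\theta\theta}|=e^{2u}|H|$, together with the elementary bound $|A_{tt}-A_{\theta\theta}|\leq |A_{tt}|+|A_{\theta\theta}|\leq C\,e^{2u}|A|$ following from $|A|_g^2=e^{-4u}(|A_{tt}|^2+2|A_{t\theta}|^2+|A_{\theta\theta}|^2)$. Combining yields
\[
	\bigl||\partial_t\mathbf{n}|^2-|\partial_\theta\mathbf{n}|^2\bigr| \leq C\, e^{2u}|A||H|,
\]
as required. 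I do not anticipate a serious obstacle: the whole argument is a direct pointwise calculation, and the only book-keeping to get right is the cancellation of the tangential pieces in $\partial_i\mathbf{n}$ and the cancellation of the $|A_{t\theta}|^2$ term in the difference, both of which are structural consequences of conformality.
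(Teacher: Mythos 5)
Your proposal is correct and takes essentially the same approach as the paper: introduce the conformal orthonormal frame, differentiate $\mathbf{n}=\tilde e_1\wedge\tilde e_2$, observe that the tangential connection terms cancel so only normal ($A$) components survive, compute the norms to obtain $|\partial_t\mathbf{n}|^2-|\partial_\theta\mathbf{n}|^2=e^{-2u}(|A_{tt}|^2-|A_{\theta\theta}|^2)$, and then factor using $H=e^{-2u}(A_{tt}+A_{\theta\theta})$.
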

\begin{proof}
	The proof is by direct computation.

We set $e_1=e_t=e^{-u}\partial_t f$, $e_2=e_\theta=e^{-u}\partial_\theta f$, and select $e_3$, $\cdots$, $e_n$ (depending smoothly on $(t,\theta)$) such that $\{e_i\}$ is an orthonormal basis of $\R^n$. The Gauss map is given by
\[
	{\bf n}(t,\theta)= e_t \wedge e_\theta.
\]
By the definition of the second fundamental form $A$ and the Christoffel symbol $\Gamma$ of the induced connection, we have
\begin{eqnarray*}
e^{-u}\frac{\partial e_t}{\partial t}&=&\Gamma_{tt}^\theta e_\theta+ e^{-2u}A_{tt}^\alpha e_\alpha \\
e^{-u}\frac{\partial e_\theta}{\partial t}&=&\Gamma_{\theta t}^te_t+e^{-2u}A_{\theta t}^\alpha e_\alpha \\
e^{-u}\frac{\partial e_t}{\partial \theta}&=&\Gamma_{t\theta}^\theta e_\theta+e^{-2u}A_{t\theta}^\alpha e_\alpha\\
e^{-u}\frac{\partial e_\theta}{\partial \theta}&=&\Gamma_{\theta\theta}^te_t+e^{-2u}A_{\theta\theta}^\alpha e_\alpha,
\end{eqnarray*}
where $\alpha=3,\cdots,n$ and $A_{tt}$ is short for $A(f_t,f_t)$, etc. By definition, we have
$$
H=e^{-2u}(A_{tt}^\alpha+A_{\theta\theta}^\alpha)e_\alpha,\s |A|^2= e^{-4u}\sum_\alpha(|A_{tt}^\alpha|^2+2|A_{t\theta}^\alpha|^2+|A_{\theta\theta}^\alpha|^2).
$$
By a direct calculation, we have
$$
\frac{\partial(e_t\wedge e_\theta)}{\partial t}= e^{-u}\left(A_{tt}^\alpha e_\alpha\wedge e_\theta+A_{\theta t}^\alpha e_t\wedge e_\alpha\right)
$$
and
$$
\frac{\partial(e_t\wedge e_\theta)}{\partial \theta}= e^{-u}\left(A_{t\theta}^\alpha e_\alpha\wedge e_\theta+A_{\theta\theta}^\alpha e_t\wedge e_\alpha\right).
$$
Hence,
$$
\abs{\partial_t {\bf n}}^2 - \abs{\partial_\theta {\bf n}}^2 =e^{-2u}\sum_\alpha|A_{tt}^\alpha|^2-|A_{\theta\theta}^\alpha|^2=e^{2u} \sum_\alpha e^{-2u}(A_{tt}^\alpha+A_{\theta\theta}^\alpha) e^{-2u}(A_{tt}^\alpha-A_{\theta\theta}^\alpha),
$$
from which \eqref{Po.A} follows.
\end{proof}

Next, we use the 3-circle argument to prove
\begin{thm}\label{3.circle.A}
	For any $q\in (0,2)$ and $\beta>0$, there exists $L_1(q,\beta)$ such that if $L>L_1$, the following holds. There exists $\epsilon(q,\beta)>0$ and $\delta(q,\beta)>0$ such that if $f$ is a conformal and Willmore immersion defined on $Q=Q_1\cup Q_2\cup Q_3$ satisfying
	\begin{enumerate}
		\item $\abs{\nabla u}\leq \beta$ where $f^*(g_{\Real^n})=e^{2u}(dt^2+d\theta^2)$;
		\item $\int_{Q} \abs{H}^2 dV_g\leq \delta \int_Q \abs{A}^2 dV_g$;
		\item $\norm{A}_{L^2(Q)}<\epsilon$;
	\end{enumerate}
then  
\[
	\int_{Q_2} \abs{A}^2 dV_g \leq e^{-qL} \left( \int_{Q_1} \abs{A}^2 dV_g+ \int_{Q_3}\abs{A}^2 dV_g \right).
\]
\end{thm}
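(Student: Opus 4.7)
The plan is to argue by contradiction via a blow-up, reducing the problem to a 3-circle inequality for a limiting vector-valued harmonic function on the cylinder, for which the pointwise identity of Lemma \ref{lem:poho.A} rules out the obstructing Fourier mode.

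Suppose the conclusion fails. For fixed $q<2$ and $\beta>0$, pick sequences $\epsilon_k,\delta_k \to 0$ and conformal Willmore immersions $f_k: Q \to \Real^n$ satisfying (1)--(3) yet violating the 3-circle, with $L$ large (to be chosen). After rescaling so that $u_k$ vanishes at a chosen interior point of $Q$, the Lipschitz bound (1) makes $|u_k|$ uniformly bounded on $Q$. Set $\mu_k^2 = \int_{Q_2}|A_k|^2 dV_{g_k}$. The failure of the 3-circle combined with (3) gives $\mu_k \to 0$ and $\|A_k\|_{L^2(Q)} \leq C(L)\mu_k$. The $\epsilon$-regularity for Willmore surfaces then forces $|A_k|\to 0$ uniformly on compact subsets of $Q$, so the Gauss maps $\mathbf{n}_k$ converge in $C^\ell_{loc}$ to a constant $\mathbf{n}_\infty \in G(2,n)$, which after a rotation of $\Real^n$ we take to be $e_1\wedge e_2$.

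Next, set $\tilde\phi_k = (\mathbf{n}_k - \mathbf{n}_\infty)/\mu_k$; for large $k$ this is a small perturbation of the tangent space $T_{\mathbf{n}_\infty}G(2,n) \cong \Real^{2(n-2)}$. Using \eqref{eqn:RV}, the bound $\|H_k\|_{L^2(Q)} \leq \sqrt{\delta_k \cdot C(L)}\,\mu_k = o(\mu_k)$ coming from (2), and interior elliptic regularity for the Willmore equation to control $\|\nabla^\perp H_k\|_{L^2_{loc}}$ in terms of $\|H_k\|_{L^2(Q)}$, one checks $\tilde\phi_k \to \tilde\phi_\infty$ smoothly on compact subsets, with $\tilde\phi_\infty$ harmonic and $\|\nabla\tilde\phi_\infty\|_{L^2(Q_2)} = 1$. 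Dividing the pointwise inequality of Lemma \ref{lem:poho.A} by $\mu_k^2$ and using the uniform $\epsilon$-regularity bounds which give $|A_k||H_k|/\mu_k^2 \to 0$ on compacta, the limit satisfies
\[
  |\partial_t\tilde\phi_\infty|^2 = |\partial_\theta\tilde\phi_\infty|^2 \quad \text{pointwise on } Q,
\]
while the inherited failure of the 3-circle reads $\int_{Q_2}|\nabla\tilde\phi_\infty|^2 \geq e^{-qL}(\int_{Q_1}|\nabla\tilde\phi_\infty|^2 + \int_{Q_3}|\nabla\tilde\phi_\infty|^2)$.

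To close the argument I would Fourier-expand each component of $\tilde\phi_\infty$ as $a^i + b^i t + \sum_{k\geq 1}\bigl[(\alpha^i_k e^{-kt}+\beta^i_k e^{kt})\cos k\theta + (\alpha'^i_k e^{-kt}+\beta'^i_k e^{kt})\sin k\theta\bigr]$. A direct computation parallel to Lemma \ref{3-circle.harmonic} shows each oscillatory mode $k\geq 1$ contributes a term whose energy satisfies a strict 3-circle with rate at most $e^{-2kL}$, better than $e^{-qL}$ once $q<2$ and $L$ is large. The only possible source of failure is therefore the linear-in-$t$ component $b^i t$. The hardest step is showing that pointwise Pohozaev rules out $b \neq 0$: integrated over $S^1$ it yields only the scalar relation $|b|^2 = 2\sum_k k^2 (\alpha_k \cdot \beta_k + \alpha'_k \cdot \beta'_k)$, insufficient in isolation, so one must examine the $\cos k\theta$ and $\sin k\theta$ Fourier coefficients of the pointwise identity, which contain terms of the form $2k(-b\cdot\alpha_k e^{-kt}+b\cdot\beta_k e^{kt})$ that must vanish identically in $t$; combined with the $\cos 2k\theta$ coefficients and the reality of the Fourier coefficients, this system forces $b = 0$. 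With $b=0$, the strict 3-circle for all oscillatory modes contradicts the inherited failure, completing the proof.
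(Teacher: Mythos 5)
Your blow-up framework is essentially the same as the paper's: normalize the Gauss map by $\|A_k\|_{L^2(Q_2)}$ (equivalently $\|\nabla\mathbf{n}_k\|_{L^2(Q_2)}$, since $\abs{A}^2\,dV_g = \abs{\nabla\mathbf{n}}^2\,dtd\theta$ in a conformal gauge), pass to a harmonic limit using \eqref{eqn:RV} and the interior estimate on $\nabla^\perp H_k$, and feed the rescaled Lemma \ref{lem:poho.A} into a 3-circle lemma for harmonic functions. The gap is in your final step: the claim that the pointwise Pohozaev identity $\abs{\partial_t\tilde\phi_\infty}^2 = \abs{\partial_\theta\tilde\phi_\infty}^2$ forces the linear coefficient $b$ to vanish is false for vector-valued harmonic functions. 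Take
\[
\phi(t,\theta) = (t,\ -\cosh t\sin\theta,\ \cosh t\cos\theta).
\]
Each component is harmonic, $\abs{\partial_t\phi}^2 - \abs{\partial_\theta\phi}^2 = (1+\sinh^2 t) - \cosh^2 t \equiv 0$, yet $b=(1,0,0)\neq 0$. Pointwise Pohozaev says the scalar Hopf differential $\sum_i(\partial_z\phi^i)^2$ is an imaginary constant, which for a single periodic real component does force $b=0$, but for several components it constrains only the sum of squares and admits exactly the kind of cancellation above. Your sketched Fourier-coefficient bookkeeping for ruling out $b\neq 0$ therefore cannot close.

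The repair is that $b=0$ is not needed. For any harmonic $\phi$, the quantity $\int_{\set{t}\times S^1}\bigl(\abs{\partial_t\phi}^2-\abs{\partial_\theta\phi}^2\bigr)\,d\theta$ is independent of $t$ (multiply $\Delta\phi=0$ by $\partial_t\phi$ and integrate by parts); hence the integrated Pohozaev on $Q_2$ already gives its vanishing on every circle, and then
\[
\int_{\set{t}\times S^1}\abs{\nabla\phi}^2\,d\theta = 2\int_{\set{t}\times S^1}\abs{\partial_\theta\phi}^2\,d\theta.
\]
The 3-circle for $\int\abs{\nabla\phi}^2$ thus reduces to that for $\partial_\theta\phi$, a harmonic function with no constant or linear mode, for which the mode-by-mode inequality $(a+b)^2\leq e^{-qL}\bigl((ae^L+be^{-L})^2+(ae^{-L}+be^L)^2\bigr)$ is elementary. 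This is exactly Lemma \ref{lem:appendix}. (Your counterexample above is consistent with it: $\int\abs{\nabla\phi}^2\sim e^{2t}$ and the 3-circle holds at rate $e^{-qL}$ for any $q<2$.) Substituting this argument for your $b=0$ step, and noting that you only need the integrated version of Lemma \ref{lem:poho.A} rather than the pointwise one, makes the proof coincide with the paper's.
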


\proof 
If otherwise, then we can find $f_k$ satisfying (1), such that 
\begin{equation}
	\label{eqn:ass}
	\lim_{k\to \infty }\frac{1}{\|A_k\|^2_{L^2(Q)}} \int_{Q} \abs{H_k}^2 dV_{g_k}= \lim_{k\to \infty }\norm{A_k}_{L^2(Q)}=0
\end{equation}
and
\begin{equation}
	\label{eqn:contra}
\int_{Q_2}|A_k|^2 dV_{g_k} \geq e^{-qL}\left(\int_{Q_1}|A_k|^2 dV_{g_k} +\int_{Q_3}|A_k|^2 dV_{g_k} \right).
\end{equation}
Similar to the proof of Theorem \ref{3-circle.H}, by a scaling of $f_k$ and (1), we first assume that $u_k$'s are bounded. Then by \eqref{eqn:ass}, we use the $\epsilon$-regularity of Willmore surface to show that $f_k$ (after translation) converges smoothly locally in $Q$. Moreover, we know $A_k$ converges to $0$ and $u_k$ converges to some smooth function $u$. 

We multiply both sides of \eqref{eqn:RV} by $\frac{1}{\norm{ \nabla {\bf n}_k}_{L^2(Q_2)}}$ and set $\phi_k= \frac{{\bf n}_k-c_k}{ \norm{ \nabla {\bf n}_k}_{L^2(Q_2)}}$ where $c_k$ is the average of ${\bf n}_k$ over $Q_2$, to get
\begin{equation}
	\label{eqn:scale}
	\Delta \phi_k + A_{G(2,n)}(d{\bf n}_k, d\phi_k)= \frac{\nabla^\perp H_k}{\norm{\nabla {\bf n}_k}_{L^2(Q_2)}}.
\end{equation}
Observe that $H_k$ satisfies \eqref{eqn:el} and we have control over any $C^l$ norm of $H_k$, $f_k$ and $A_k$ so that we can rewrite \eqref{eqn:el} as a linear equation of $\phi_k$ with bounded coefficients. The elliptic interior estimate then implies that
\[
	\sup_{ [\eta,3L-\eta]\times S^1} \abs{\nabla^\perp H_k} \leq C(\eta) \norm{H_k}_{L^2(Q)} .
\]
Thanks to \eqref{eqn:ass} and \eqref{eqn:contra}, by taking $k\to \infty $, we obtain a (vector-valued) harmonic function $\phi$ as the limit of $\phi_k$ defined on $Q$. By \eqref{eqn:contra} and the choice of scaling constant, we have
\[
	\int_{Q_2} \abs{\nabla\phi}^2 dtd\theta =1, \quad \int_{Q} \abs{\nabla \phi}^2 dtd\theta <C.
\]

It follows from \eqref{Po.A} and \eqref{eqn:ass} that
\[
	\int_{Q_2} \abs{\partial_t \phi}^2 - \abs{\partial_\theta \phi}^2 dtd\theta =0.
\]
For such harmonic functions, there is a version of three circle lemma which implies the existence of $L_1$ such that if $L>L_1$
\begin{equation}
	\label{eqn:inappendix}
	\int_{Q_2} \abs{\nabla\phi}^2 dt\theta < e^{-qL} \left( \int_{Q_1} \abs{\nabla\phi}^2 dt\theta + \int_{Q_3} \abs{\nabla\phi}^2 dtd\theta \right).
\end{equation}
We postpone the proof of this claim to the appendix (see Lemma \ref{lem:appendix}).
The inequality \eqref{eqn:inappendix} contradicts \eqref{eqn:contra} (by the smooth convergence on $Q_2$ and the Fatou's lemma) and finishes the proof.
\endproof

\subsection{Proof of Theorem \ref{thm:main}.}
Now, we combine the decay of the mean curvature (Corollary \ref{pro:decayH}) with Theorem \ref{3.circle.A} to show the decay of the second fundamental form. 

\proof 
Given $m$ and $q$, fix some $q'\in (q,2)$ and let $\beta=m+1$. Thereom \ref{3.circle.A} determines $L_1$, $\epsilon$ and $\delta$ depending on $q'$ and $\beta$. Corollary \ref{pro:decayH} determines $L_0$ and $\delta_0$ depending on $q'$ and $m$. Fix $\delta_2<\min (\delta_0,\epsilon/3)$ and $L_2>\max(L_1,L_0)$ satisfying
\begin{equation}
	\label{eqn:smart}
	e^{- (q'-q)L}<1/2 \qquad \text{for} \quad L>L_2.
\end{equation}
For $1<i<k$, if
\begin{equation}
	\label{eqn:good}
\int_{Q_{i-1}\cup Q_i\cup Q_{i+1}}|A|^2 dV_g \leq \frac{1}{\delta}\int_{Q_{i-1}\cup Q_i\cup Q_{i+1}} \abs{H}^2 dV_g.
\end{equation}
Then \eqref{eqn:decayA} for $i$ holds due to Corollary \ref{pro:decayH}. 

In what follows, we assume \eqref{eqn:good} is not true for $i$, hence it verifies the assumption (2) in Theorem \ref{3.circle.A}. The assumption (1) holds for $\beta=m+1$ and $\delta_2<1$, and the assumption (3) is obvious because $\delta_2<\epsilon/3$. 
Hence Theorem \ref{3.circle.A} (for $q'$ and $Q=Q_{i-1}\cup Q_i\cup Q_{i+1}$) then implies that 
\[
	\int_{Q_i} \abs{A}^2 dV_g \leq e^{-q'L} \left( \int_{Q_{i-1}} \abs{A}^2 dV_g + \int_{Q_{i+1}}\abs{A}^2 dV_g \right).
\]
By \eqref{eqn:smart}, we have either 
$\int_{Q_i}|A|^2 dV_g <e^{-qL}\int_{Q_{i-1}}|A|^2 dV_g$ or $\int_{Q_i}|A|^2 dV_g<e^{-qL}\int_{Q_{i+1}}|A|^2 dV_g$.
Without loss of generality, we assume the later one is true. Hence, the proof of \eqref{eqn:decayA} for $i$ is reduced to the proof of the same inequality with $i$ replaced by $i+1$. 

Now, we may repeat the above argument to consider $i+2$, $i+3$,$\cdots$, until either \eqref{eqn:good} holds for $i+l$, or $i+l=k$. In both cases, we have \eqref{eqn:decayA} true for $i+l$ and hence \eqref{eqn:decayA} holds for $i$.
\endproof

\section{Positive answers to Q1 and Q2}
\label{sec:decay}

In this section, we assume $f_k$ is a sequence of conformal and Willmore immersions satisfying A.1)-A.4). Our goal is to give positive answers to Q1 and Q2.
%

Before that, we prove a claim promised in Remark \ref{rem:a3}.

\begin{lem}
	\label{lem:remark} If $f_k:[0,T_k]\times S^1\to \Real^n$ can be extended to conformal and Willmore immersions of closed surfaces $\Sigma'_k$ with bounded Willmore energy and genus, then there exists some $\beta>0$ and $T>0$ depending on the sequence such that 
	\begin{equation}
		\label{eqn:beta}
\sup_k \|\nabla u_k\|_{L^\infty([T,T_k-T])}<\beta.
	\end{equation}
\end{lem}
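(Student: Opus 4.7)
Plan. My approach combines a topological bound on $\int|A_k|^2$ with the bubble-tree convergence theory for Willmore surfaces, exploiting a conformal invariance of the relevant quantity.

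First I would use the Gauss equation (relating $\abs{A}^2$, $\abs{H}^2$ and the intrinsic Gauss curvature) together with Gauss--Bonnet on the closed $\Sigma'_k$ to obtain $\sup_k\int_{\Sigma'_k}\abs{A_k}^2\,dV_{g_k}\le C_0<\infty$ under the assumed bounds on Willmore energy and genus; in particular the same bound holds on the cylinder $[0,T_k]\times S^1$. Next I would use A.4) to pass to disk coordinates: let $\tilde u_k$ denote the conformal factor of $\tilde f_k:D\to\Real^n$, and write $x=e^{-t}(\cos\theta,\sin\theta)$, so $u_k(t,\theta)=\tilde u_k(x)-t$. A direct computation gives
\[
	\abs{\nabla u_k(t,\theta)}^2 = \abs{x}^2\abs{\nabla_x\tilde u_k(x)}^2 + 2x\cdot\nabla_x\tilde u_k(x) + 1,
\]
so the claim is equivalent to a uniform bound on the dilation-invariant quantity $\abs{x}\,\abs{\nabla_x\tilde u_k(x)}$ on the annulus $\{e^{T-T_k}\le\abs{x}\le e^{-T}\}$.

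I would then apply the bubble-tree convergence theory for Willmore immersions (Kuwert--Sch\"atzle, Rivi\`ere) to the sequence $\tilde f_k:D\to\Real^n$. After passing to a subsequence, there are at most finitely many concentration points in $\bar D$; away from them $\tilde f_k$ converges smoothly (modulo ambient translations). At each concentration point one or several bubbles form at some scales $r_k\to 0$, each of which rescales to a nontrivial conformal Willmore immersion of $\Real^2$ with finite total curvature, extending by removable singularity (cf.\ Kuwert--Sch\"atzle and Section~\ref{subsec:remove}) to a possibly branched Willmore immersion of $S^2$. The necks joining bubbles (and joining bubbles to the smooth-convergence part) carry small $\int\abs{A_k}^2$, so the Müller--Šverák conformal factor estimate combined with Willmore $\epsilon$-regularity provides a uniform bound on $\abs{\nabla_x\tilde u_k}$ there.

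On each of the three types of pieces---smooth region, bubble region, neck region---I therefore obtain a bound on $\abs{\nabla_x\tilde u_k}$ in the natural scale for that piece, and the dilation-invariance of $\abs{x}\,\abs{\nabla_x\tilde u_k}$ upgrades this to a bound on the desired invariant quantity in the original (disk) scale. Summing over the finitely many pieces gives the lemma. The hard part will be the analysis near $\abs{x}=0$, where the concentration may align with the degenerating end of the cylinder (so the bubble and the annulus $\{e^{T-T_k}\le\abs{x}\le e^{-T}\}$ live at comparable scales); here assumption A.4) is essential, because it guarantees that $\tilde f_k$ is smooth across $\abs{x}=0$ so that the sequence admits a genuine bubble-tree limit on the full disk $D$ rather than losing information into a limit punctured disk, and this is what allows the bookkeeping of bubbles and necks to cover the shrinking inner part of the annulus.
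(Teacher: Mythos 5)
Your plan takes a genuinely different route from the paper's. The paper argues by contradiction in a few lines: after using Gauss--Bonnet for the $L^2$ bound on $A_{\Sigma'_k}$ (same as your first step), if \eqref{eqn:beta} failed one could extract $t_k$ with $t_k,T_k-t_k\to\infty$ and $\sup_{\{t_k\}\times S^1}|\nabla u_k|\to\infty$; setting $\hat f_k(t,\theta)=\lambda_k\bigl(f_k(t_k+t,\theta)-c_k\bigr)$ for appropriate scalings and translations, the Kuwert--Li theorem (\cite{K-L}, Theorem~7.1; see also \cite{L-W-Z}, Theorem~6.4) gives $W^{2,2}$ convergence of $\hat f_k$ to a conformal map, which H\'elein's theorem together with $\epsilon$-regularity upgrades to smooth convergence, so $|\nabla\hat u_k|$ is uniformly bounded on $\{0\}\times S^1$; this contradicts $|\nabla\hat u_k|(t,\theta)=|\nabla u_k|(t+t_k,\theta)\to\infty$. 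Crucially, the translation $t\mapsto t+t_k$ never leaves the cylinder, so assumption A.4) plays no role.

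Your global covering argument, by contrast, rests on the full bubble-tree machinery and has some gaps as written. (i) You invoke A.4), which is not among the hypotheses of the lemma (the lemma assumes only the closed-surface extension) and, as the paper's proof shows, is not needed. (ii) For a bubble or neck sitting at a concentration point $p\ne0$, the scale-invariant local estimate controls $|x-p|\,|\nabla_x\tilde u_k(x)|$, not the target $|x|\,|\nabla_x\tilde u_k(x)|$; these are incomparable as $x\to p$. The fix is that the finitely many concentration points $p_i\ne 0$ sit at \emph{fixed} cylinder heights $t=-\log|p_i|$, which a suitably large $T$ excludes --- this is precisely why $T$ must depend on the sequence, and it should be made explicit rather than glossed over. (iii) The ``hard part'' you flag near $x=0$ requires a bubble-tree decomposition on the degenerating annulus $\{e^{-T_k}\le|x|\le1\}$ (or on $\Sigma'_k$ near that boundary circle), a strictly stronger statement than the fixed-domain or closed-surface bubble-tree results you implicitly cite; A.4) is a crutch here but not the real issue. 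The paper's contradiction argument sidesteps all three points by needing only the single Kuwert--Li convergence statement along an arbitrary sequence of cylinder heights. Your change-of-variable identity $|\nabla u_k|^2=|x|^2|\nabla_x\tilde u_k|^2+2x\cdot\nabla_x\tilde u_k+1$ is correct and a nice observation, but the surrounding covering argument would need substantial tightening to be a complete proof.
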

\begin{proof}
	The Gauss-Bonnet theorem implies that $\norm{A_{\Sigma'_k}}_{L^2}$ is uniformly bounded. By Theorem 7.1 in \cite{K-L} (or Theorem 6.4 in \cite{L-W-Z}), for any sequence $t_k$ satisfying $t_k\to +\infty $ and $T_k-t_k\rightarrow+\infty$, we can find $\lambda_k$ and $c_k$, such that $\hat f_k=\lambda_k(f_k(t_k+t,\theta)-c_k)$ converges in $W^{2,2}$ to a 
conformal map. According to Helein's convergence theorem, at this point, we have $|u_k|$ is bounded on any $(-T,T)\times S^1$, and further, due to the $\epsilon$-regularity (cf. \cite[Theorem 2.10]{K-S},\cite[Theorem I.5]{R}), 
$\hat f_k$ converges smoothly. Hence, if $\hat{u}_k$ is defined by $e^{2\hat{u}_k}(dt^2+d\theta^2)= \hat{f}_k^*(g_{\Real^n})$, we have
\[
	\sup_k \sup_{\set{0}\times S^1} \abs{\nabla\hat{u}_k}<\infty.
\]
If \eqref{eqn:beta} were not true, then after taking a subsequence, we can find $t_k$, such that $t_k\rightarrow+\infty$, $T_k-t_k\rightarrow+\infty$, and 
$\sup_{\{t_k\}\times S^1}|\nabla u_k|\rightarrow+\infty$. This is a contradiction because
\[
	\abs{\nabla \hat{u}_k}(t,\theta) = \abs{\nabla u_k}(t+t_k,\theta).
\]
\end{proof}

Now, let's discuss Q1 and Q2. We observe that the positive answers to Q1 an Q2 follow from the following claim:

{\bf Claim.} For any $\varepsilon>0$, there exist $T>0$ and $M>0$ such that for any $k>M$, we have
\[
	\int_{[t,t+1]\times S^1} \abs{A_k}^2 dV_k \leq C\varepsilon \left( e^{-q (t-T)}+e^{-q(T_k-T-t)} \right) \qquad \text{for} \quad t\in [T,T_k-T].
\]

The proof of the claim uses Theorem \ref{thm:out}. The assumption (a) follows from A.4) and Theorem \ref{thm:poho}. By A.2), we have $T'>0$ and $M'>0$ such that for $k>M'$,
\[
	\sup_{t\in [T',T_k-T']} \int_{[t,t+1]\times S^1} \abs{A_k}^2 dV_k< \min(\delta_1,\varepsilon),
\]
which is the assumption (c). The proof of the claim is then reduced to the verification of (b), which is the next lemma. Assuming the lemma, we can take $T=\max(T',T'')$ and $M=\max(M',M'')$ and apply Theorem \ref{thm:out} to the restriction of $f_k$ on $[T,T_k-T]\times S^1$ to see that the claim holds.

\begin{lem}
	\label{lem:b} Assume that $f_k$ satisfies A.1)-A.3). For any $\varepsilon>0$, there exist $T''>0$ and $M''>0$ such that for any $k>M''$, there is $m_k\in \mathbb Z\setminus \set{0}$ with $\abs{m_k}\leq \beta$ such that if $f_k^*(g_{\Real^n})=e^{-2m_kt + 2v_k}(dt^2+d\theta^2)$, then we have
\begin{equation}
	\label{eqn:remain}
	\sup_{[T'',T_k-T'']}\abs{\nabla v_k}<\varepsilon.
\end{equation}
\end{lem}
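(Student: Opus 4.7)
The plan is to argue by contradiction, combining a local blow-up with the rigidity of flat conformal immersions of a cylinder into $\R^n$ already used in the proof of Theorem \ref{3-circle.H}. Heuristically, near any point $t_0$ deep in the middle of the cylinder, the conformal factor $u_k$ (defined by $f_k^*(g_{\R^n})=e^{2u_k}(dt^2+d\theta^2)$) should look like $-mt$ for some integer $m$ with $0<\abs m\leq \beta$, and this integer cannot jump as $t_0$ varies; I would then take $m_k$ to be this common integer.

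The key step is a local convergence statement: for any $\varepsilon'>0$, there exist $T_1,M_1$ such that for every $k>M_1$ and every $t_0\in [T_1,T_k-T_1]$ there is $m(k,t_0)\in \mathbb Z\setminus\set{0}$ with $\abs{m(k,t_0)}\leq \beta$ and
\[
	\sup_{[t_0-1,t_0+1]\times S^1}\abs{\nabla u_k+(m(k,t_0),0)}<\varepsilon'.
\]
I would prove this by contradiction, producing sequences $k_n\to\infty$ and $t_n$ with $t_n, T_{k_n}-t_n\to\infty$ at which the inequality fails for every admissible integer. Normalizing $\hat f_n(t,\theta):=e^{-u_{k_n}(t_n,0)}(f_{k_n}(t+t_n,\theta)-f_{k_n}(t_n,0))$ produces a sequence of conformal Willmore immersions whose conformal factor $\hat u_n$ vanishes at $(0,0)$ and still satisfies $\abs{\nabla \hat u_n}\leq \beta$, hence is uniformly bounded on compact sets. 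By A.2), $\int\abs{\hat A_n}^2\, dV_{\hat g_n}\to 0$ on any fixed compact set, so the $\epsilon$-regularity of \cite[Theorem I.1]{B-R} together with the Willmore equation furnishes $C^k_{\mathrm{loc}}$ bounds, and a subsequence converges smoothly to a conformal Willmore immersion $f_\infty:\R\times S^1\to \R^n$ with vanishing second fundamental form. The image of $f_\infty$ then lies in an affine $2$-plane, and the computation already carried out in the proof of Theorem \ref{3-circle.H} shows that after a rigid motion in the target and a rotation $\theta\mapsto\theta+\theta_0$,
\[
	f_\infty(t,\theta)=\tfrac{1}{m}e^{-mt}(\cos m\theta,\sin m\theta,0,\ldots,0)
\]
for some $m\in \mathbb Z\setminus\set{0}$ with $\abs m\leq \beta$: integrality and non-vanishing of $m$ come from single-valuedness on $\mathbb C^*$ of the holomorphic primitive through which $f_\infty$ factors, and the bound from $\abs{\nabla u_\infty}\leq\beta$. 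Consequently $\nabla u_{k_n}(\cdot+t_n,\cdot)\to (-m,0)$ uniformly on $[-1,1]\times S^1$, contradicting the choice of $t_n$.

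With this local statement in hand, I would pick $\varepsilon'<1/2$ so that the integer $m(k,t_0)$ is uniquely determined by $\nabla u_k$ on $[t_0-1,t_0+1]\times S^1$ (distinct integers being at distance $\geq 1$); overlapping windows force $m(k,t_0)=m(k,t_0')$ whenever $\abs{t_0-t_0'}\leq 1$, so by connectedness of $[T_1,T_k-T_1]$ the map $t_0\mapsto m(k,t_0)$ is a single constant $m_k$. For the target $\varepsilon$ in the lemma I would apply the local statement with $\varepsilon':=\min(\varepsilon,1/2)/2$ to obtain $T_1,M_1$, then set $T'':=T_1$ and $M'':=\max(M_1,2T_1+1)$; this yields $\abs{\nabla v_k}=\abs{\nabla u_k+(m_k,0)}<\varepsilon$ on $[T'',T_k-T'']\times S^1$, which is \eqref{eqn:remain}. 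The only non-routine ingredient is the rigidity classification of the blow-up limit $f_\infty$, and this is already done in Section \ref{sec:3will}; the rest is the contradiction plus the isolation of $\mathbb Z$ inside $\R$, so the lemma is mostly a bookkeeping exercise around compactness and $\epsilon$-regularity.
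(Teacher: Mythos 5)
Your argument is correct and follows essentially the same path as the paper's own proof: reduce to a local (pointwise-in-$t_0$) statement proved by contradiction, blow up via $\hat f_n(t,\theta)=e^{-u_{k_n}(t_n,0)}(f_{k_n}(t+t_n,\theta)-f_{k_n}(t_n,0))$, use A.2)--A.3) plus $\epsilon$-regularity to extract a flat conformal Willmore limit $f_\infty$, classify it as the standard covering of a round circle to force $m\in\mathbb Z\setminus\{0\}$ with $\abs m\leq\beta$, and then observe that the integer slope near a given $t_0$ is uniquely pinned down (choose $\varepsilon'<1/2$) and hence constant in $t_0$ by connectedness. The paper phrases the last step as a continuity argument on $\partial_t u_k$ rather than your overlapping-windows argument, but the content is the same.
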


\begin{proof}
	We consider first an apparently weaker statement: for any $\varepsilon>0$, there exist $T''>0$ and $M''>0$ such that for any $k>M''$, for any $t_0\in [T'',T_k-T'']$, we have $m=m(k,t_0)\in \mathbb Z\setminus \set{0}$ and $f_k^*(g_{\Real^n})=e^{2u_k}(dt^2+d\theta^2)=e^{-2mt + 2v_k}(dt^2+d\theta^2)$ satisfying $\abs{m}\leq \beta$ and
	\[
		\sup_{\set{t_0}\times S^1} \abs{\nabla v_k}<\varepsilon. 
	\]
Assuming this ``weaker" statement, if we fix $k>M''$, by the continuity of $\nabla u_k$ and 
\[
	\sup_{\set{t_0}\times S^1} \abs{\partial_t(u_k)+ m(k,t_0))}<\varepsilon, \qquad \text{for all} \quad t_0\in [T'',T_k-T''],
\]
we know $m(k,t_0)$ is independent of $t_0$. Hence, the lemma follows from this ``weaker" statement, which we prove below by contradiction.  

Assume that there is $\varepsilon_0>0$, a subsequence $k_l$, such that if $u_l$ is defined by 
	\[
		f^*_{k_l}(g_{\Real^n})= e^{2u_l}(dt^2+d\theta^2),
	\]
	we have $t_l\in [l,T_k-l]$ satisfying
	\begin{equation}
		\label{eqn:tl}
		\sup_{\set{t_l}\times S^1}\abs{\partial_t (u_l+mt)}\geq \varepsilon_0 \qquad \text{for any} \quad m\in \mathbb Z\setminus \set{0},\s \abs{m}\leq \beta.
	\end{equation}
	For $\lambda_l=e^{-u_l(t_l,0)}$, we set $\hat{f}_l(t,\theta)=\lambda_l (f_{k_l}(t+t_l,\theta)- (2\pi)^{-1}\int_0^{2\pi} f_{k_l}(t_l,\theta)d\theta)$. Let $e^{2\hat{u}_l}$ be the conformal factor of $\hat{f}^*_l(g_{\Real^n})$. By our choice of $\lambda_l$ and A.3), $\hat{u}_l$ is uniformly bounded on $[-l_0,l_0]\times S^1$ when $l\geq l_0$. By A.2), if we take $l_0$ sufficiently large, then for $l>l_0$, we have
	\[
		\sup_{t\in [-l_0/2,l_0/2]} \int_{[t,t+1]\times S^1} \abs{A_l}^2 dtd\theta< \epsilon
	\]
	for any $\epsilon$. Therefore, the $\epsilon$-regularity provides any $C^k$ bound for $\hat{f}_l$ on $[-l_0/3,l_0/3]\times S^1$. Hence, by taking subsequence (still denoted by $\hat{f}_l$), we may assume that $\hat{f}_l$ converges locally smoothly to a limit $f_\infty $ defined on $\Real \times S^1$.

$f_\infty$ is a conformal Willmore immersion from $\Real \times S^1$ to $\R^n$ with vanishing second fundamental form. If $(f_\infty )^*(g_{\Real^n})=e^{2u}(dx^2+dy^2)$, then $u$ is a harmonic function defined on $\Real \times S^1$ satisfying $\abs{\nabla u}\leq \beta$. By the well known expansion for harmonic function
\begin{equation}
	\label{eqn:expharmonic}
	u(t,\theta)=-mt+b + \sum_{k=1}^\infty \left( (a_ke^{-kt}+b_k e^{kt})\cos k\theta + (a'_k e^{-kt}+b'_k e^{kt})\sin k\theta \right),
\end{equation}
we must have $u(t,\theta)=-m t+b$ for $m\in \Real$ with $\abs{m}\leq \beta$. By modifying $\lambda_l$, we may assume that $b=0$ and hence $u(t,\theta)\equiv -mt$.

Since $A_{f_\infty }\equiv 0$, the image of $f_\infty $ lies in a plane. By a rotation of $\Real^n$, we may assume that $f_\infty (t,\theta)=(w(t,\theta),v(t,\theta),0,\cdots ,0)$ and $w+iv$ is a holomorphic function. By $u(t,\theta)\equiv -mt$, we obtain
\[
	\abs{\partial_t w}^2+ \abs{\partial_\theta w}^2 = \abs{\partial_t v}^2+ \abs{\partial_\theta v}^2= e^{-2mt}, \qquad \text{for all} \quad (t,\theta)\in \Real \times S^1.
\]
This equation, together with a similar expansion of $w$ as \eqref{eqn:expharmonic}, we conclude that $m\in \mathbb Z\setminus \set{0}$. This is a contradiction to \eqref{eqn:tl} because $\nabla u_l(t+t_l,\theta)= \nabla \hat{u}_l(t,\theta)$ converges to $\nabla u(t,\theta)$ on near $\set{0}\times S^1$ smoothly but
\[
	\sup_{\set{0}\times S^1} \abs{\partial_t u + m}=0.
\]

\end{proof}

\section{Applications}
\label{sec:app}
In this section, we derive two applications of the decay estimates that were proved in the previous sections.

\subsection{Removability of singularities}
\label{subsec:remove}

The problem of removability of isolated singularities for Willmore surfaces was first studied by Kuwert and Sch\"atzle for the codimension one case. In their works \cite{K-S2,K-S3}, they demonstrated that under the assumption of finite $L^2$ integral of the second fundamental form and finite area, such surfaces are $C^{1,\alpha}$ smooth. Moreover, the surface can be extended smoothly to be an embedded surface when the singularity residue is zero and the area density is less than 2. Later, Rivi\`ere  proved that such results are valid for higher codimensions \cite{R}. Unlike harmonic mappings, $C^{1,\alpha}$ (for any $\alpha\in (0,1)$) is the best we can hope for isolated singularities of Willmore surfaces without additional conditions (e.g., the inversion of a catenoid). In \cite{B-R0}, Bernard and Rivi\`ere explored whether Willmore surfaces with isolated singularities could be represented as the image sets of smooth mappings.

Based on the discussions in \cite{K-S2,K-S3}, to prove the removability of singularities, we only need to demonstrate the decay of the second fundamental form.

\begin{cor}\label{cor:rem}
	Let $f:D\setminus \set{0}\rightarrow\R^n$ be a conformal and Willmore immersion. If 
\[
	\norm{A_f}_{L^2(D\setminus \set{0})}<\infty \qquad \text{and} \qquad \mbox{Area}(f)<\infty ,
\]
then for any $q\in(0,2)$, we have
\begin{equation}
	\label{eqn:remove}
\int_{(t,t+1)\times S^1}|A_f|^2dV_g\leq C(q,f)e^{-qt} \qquad \text{for} \quad t>0.
\end{equation}
\end{cor}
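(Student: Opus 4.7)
The plan is to apply Theorem \ref{thm:out} to $f$, written in cylindrical coordinates $(t,\theta)=(-\log r,\theta)$, on intervals $[T_0,T_0+N]\times S^1$ with $T_0$ large and $N\to\infty$. The hypothesis $\norm{A_f}_{L^2(D\setminus\set{0})}<\infty$ immediately ensures condition (c): for $T_0$ sufficiently large, $\sup_{t\geq T_0}\int_{[t,t+1]\times S^1}|A|^2\,dV_g$ is as small as needed.

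For condition (b), I first establish the asymptotic form of the conformal factor via a blow-up analysis parallel to Lemma \ref{lem:b}. For any sequence $t_l\to\infty$, the rescaled immersions $\hat f_l(t,\theta):=\lambda_l\bigl(f(t+t_l,\theta)-c_l\bigr)$, with $\lambda_l$ normalizing the conformal factor at $(0,0)$, subconverge smoothly on compacta of $\R\times S^1$ (using local smallness of $\int|A|^2$ together with the $\epsilon$-regularity) to a conformal Willmore immersion with vanishing second fundamental form. The limit lies in a plane and has conformal factor $e^{-mt}$ for some $m\in\mathbb{Z}_+$: integrality is forced by the cylindrical domain and positivity by the finite-area hypothesis. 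A contradiction argument in the spirit of Lemma \ref{lem:b} upgrades this to uniform control, giving $f^*(g_{\R^n})=e^{-2mt+2v}(dt^2+d\theta^2)$ with $\sup_{t\geq T_0}\norm{\nabla v}_{L^\infty(\set{t}\times S^1)}\to 0$ as $T_0\to\infty$.

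The main obstacle is condition (a), namely $\tau_1(f,c)=\tau_2(f,S)=0$ for every $c\in\R^n$ and $S\in\so$. Since $f$ is only defined on $D\setminus\set{0}$, Theorem \ref{thm:poho} does not apply directly. By Remark \ref{rem:t} the residues are constants in $t$, so it suffices to evaluate them in the limit $t\to\infty$. After translating $f$ so that its continuous extension at the singularity equals $0$, the factor $Sf+c$ is uniformly bounded on $[T_0,\infty)\times S^1$. The $\epsilon$-regularity for Willmore immersions provides pointwise bounds on $|H|$, $|\partial_t H|$ and the Euclidean components of $A$ on each $\set{t}\times S^1$ in terms of local $L^2$ norms of $A$ and the conformal factor $e^{-mt}$; combined with $|\partial_j f|=e^{-mt+v}$ from condition (b), one aims to show that each integrand in the definitions of $\tau_1$ and $\tau_2$ has $L^1(\set{t}\times S^1)$ norm tending to $0$ as $t\to\infty$, forcing $\tau_1\equiv 0$ and $\tau_2\equiv 0$. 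The delicate point here is the competition between the exponential growth $e^{mt}$ coming from the scaling of $|H|$ and the decay of $\|A\|_{L^2([t-1,t+1]\times S^1)}$; closing the argument may require the $C^{1,\alpha}$-extendability of $f$ at the singularity proved by Kuwert--Sch\"atzle \cite{K-S2,K-S3} and Rivi\`ere \cite{R} as input.

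Once (a), (b), (c) are verified, Theorem \ref{thm:out} applied to $f|_{[T_0,T_0+N]\times S^1}$ gives
\[
\int_{[t,t+1]\times S^1}|A|^2\,dV_g\leq C\tilde\delta\bigl(e^{-q(t-T_0)}+e^{-q(T_0+N-t)}\bigr),\qquad t\in[T_0,T_0+N-1].
\]
Letting $N\to\infty$ erases the second term, and absorbing $e^{qT_0}$ into the constant yields \eqref{eqn:remove} for $t\geq T_0$; for $t\in(0,T_0]$, the trivial bound $\int_{[t,t+1]\times S^1}|A|^2\,dV_g\leq\norm{A_f}_{L^2}^2$ combined with enlarging $C(q,f)$ finishes the argument.
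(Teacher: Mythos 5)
Your verification of conditions (b) and (c) follows the paper, but the plan for condition (a) cannot work: $\tau_1(f,c)=0$ is simply \emph{false} in the setting of Corollary~\ref{cor:rem}. The paper points this out explicitly — the inverted catenoid is a conformal Willmore immersion of $D\setminus\{0\}$ with finite $\|A\|_{L^2}$ and finite area for which $\tau_1(f,c)\neq 0$ for some $c$. So no amount of estimating the integrands will make $\tau_1$ vanish, and your heuristic already reveals the obstruction: by Remark~\ref{rem:t} the integrals defining $\tau_1$ are independent of $t$, so if some of the individual integrands scale like $e^{mt}\cdot\varepsilon(t)$ with $\varepsilon(t)\to 0$ there is nothing forcing the $t$-independent total to be $0$; for the inverted catenoid it is not. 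The fallback you mention — feeding in the $C^{1,\alpha}$-extendability of Kuwert--Sch\"atzle/Rivi\`ere — is circular, since the entire point of this corollary is to \emph{reprove} that extendability from the decay estimate.

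The paper therefore does not verify (a) and does not invoke Theorem~\ref{thm:out} at all for the $H$-decay. Instead, it exploits Remark~\ref{rem:tau1}: only $\tau_2(f,S)=0$ is actually used in the three-circle argument for $H$; the hypothesis $\tau_1(f,c)=0$ in Theorem~\ref{3-circle.H} is there solely to make $\tau_2=0$ survive the \emph{translation} in $\R^n$ needed to normalize the blow-down limit. For a single $f:D\setminus\{0\}\to\R^n$, the Kuwert--Li result gives a canonical choice $f(0)=0$ for which the rescalings $\hat f_k(t,\theta)=e^{mt_k}f(t_k+t,\theta)$ converge without any further translation to the planar profile $\frac{e^{-mt}}{m}(\cos m\theta,\sin m\theta,0,\dots,0)$. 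Since $\tau_2$ is scaling- and domain-translation-invariant, $\tau_2(f,S)=\tau_2(\hat f_k,S)\to\tau_2(f_\infty,S)=0$, so $\tau_2(f,S)=0$ for every $S\in\so$. Rerunning the proof of Theorem~\ref{3-circle.H} with this input (and no target translation) gives the three-circle inequality for $H$; from there the argument of Section~\ref{sec:A} applies verbatim to produce \eqref{eqn:remove}. You should replace your verification of (a) with this detour around $\tau_1$.
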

This corollary will imply that $f$ can be extended to a $C^{1,\alpha}$ map from $D$ into $\R^n$. Hence, we have reproved a result due to Kuwert and Sch\"atzle for the case of $n=3$ \cite{K-S2,K-S3} and  Rivi\`ere for the general case \cite{R}.

Ideally, a possible approach of this corollary is to apply Theorem \ref{thm:out} to the restriction of $f$ on $[0,k]\times S^1$ and take the limit $k\to \infty $. Unfortunately, it is not possible to verify the assumption $\tau_1(f,c)=0$ for all $c\in \Real^n$. Indeed, for the inverted catenoid, this is not true, but it satisfies the assumptions of Corollary \ref{cor:rem}. The only place that we need $\tau_1(f,c)=0$ is in the proof of Theorem \ref{3-circle.H} (and hence Corollary \ref{pro:decayH} for the decay of the mean curvature). 

Here the solution is to prove the decay of the mean curvature directly without appealing to Theorem \ref{3-circle.H}. This is possible because we work with a special solution $f$ defined on $D\setminus \set{0}$. If we run the argument of Theorem \ref{3-circle.H} for a sequence of scaling of $f$, thanks to the known result of Kuwert and Li \cite{K-L}, we do not need any translation in the proof, which spares us the need of having $\tau_1(f,c)=0$ (See Remark \ref{rem:tau1}.)

\proof 
Using cylinder coordinate, we regard $f$ as defined on $[0,\infty )\times S^1$. Fix any $q'\in (q,2)$ and fix any $L>L_0$, where $L_0$ is given in Theorem \ref{3-circle.H}. Recall that $[0,\infty ) \times S^1$ is the union of $Q_i$ for $i=1,2,\cdots $.

{\bf Claim.} There is some $i_0>0$ such that for any $i>i_0$, we have
\begin{equation}
	\label{eqn:3ch}
	\int_{Q_i} \abs{H}^2dV_g\leq e^{-q'L}\left( \int_{Q_{i-1}}\abs{H}^2 dV_g + \int_{Q_{i+1}} \abs{H}^2 dV_g \right).
\end{equation}
Assuming the claim, it follows from the discussion at the beginning of Section \ref{sec:3will} and the finiteness of the integral $\int_{[0,\infty )\times S^1} \abs{H}^2 dV_g$ that we have the decay estimate
\[
	\int_{Q_i} \abs{H}^2 dV_g \leq C e^{-iqL} \qquad \text{for} \quad i>i_0.
\]
The proofs in Section \ref{sec:A} can be repeated line by line for $f_k= f|_{[i_0,i_0+k] \times S^1}$ to get the dacay estimate \eqref{eqn:remove} for $f$.

The rest of the proof is devoted to the proof of the claim. If otherwise, we find a subsequence $i_k$ such that \eqref{eqn:3ch} is not true for $i_k$.

By Theorem 3.1 of \cite{K-L}, we know $f\in W^{2,2}(D)$ and there exists $m\in \mathbb Z_+$ and $u\in C^0(D)$ such that 
\[
	f^*(g_{\Real^n})= e^{2u}\abs{z}^{2(m-1)}(dx^2+dy^2).
\]
Without loss of generality, we assume $f(0)=0$ and $u(0)=0$. It is also proved there that for any sequence $r_k\to 0$, if we set $f_k(z)=\frac{f(r_kz)}{r_k^{m}}$, then $|\nabla f_k|$ is bounded, and hence $f_k$ sub-converges to $\frac{z^m}{m}$ in $C^0_{loc}(D)\cap C^\infty_{loc}(D\setminus\{0\})$ after a rotation that is independent of $k$.

In terms of $(t,\theta)$ coordiante, the above result means that for $t_k=i_k L$,
\[
\hat{f}_k(t,\theta):=e^{mt_k}f(t_k+t,\theta)
\]
converges (after a rotation independent of $k$) to
\[
	f_\infty(t,\theta)=\frac{e^{-mt}}{m}(\cos m\theta,\sin m\theta,0,\cdots,0)
\]
locally smoothly on $\Real \times S^1$. There is a similar part in the proof of Lemma \ref{lem:b}. 

Since $\tau_2$ is independent of the scaling of $f$, we obtain
\[
	\tau_2(f,S)= \tau_2(\hat{f}_k,S) \to \tau_2(f_\infty ,S)=0 \qquad \text{as} \quad k\to \infty .
\]
In particular, $\tau_2(\hat{f}_k,S)=0$ for any $S\in \so$.

Set $\phi_k=\frac{H_{\hat{f}_k}}{\norm{ H_{\hat{f}_k} e^{-mt}}_{L^2(Q_1)}}$ and regard them as defined on $Q=Q_{0}\cup Q_1\cup Q_2$. By our choice of $i_k$, we have
\begin{equation}
	\label{3.w2}
	\int_{Q_1} \abs{\phi_k}^2 e^{-2mt} dt\theta> e^{-qL} \left( \int_{Q_0} \abs{\phi_k}^2 e^{-2mt} dtd\theta+ \int_{Q_2} \abs{\phi_k}^2 e^{-2mt} dtd\theta \right).
\end{equation}
Now we can obtain a contradiction using the same argument in the proof of Theorem \ref{3-circle.H}. To avoid repetition, we list only a few major points. $\phi_k$ satisfies some linear elliptic equation whose coefficients are bounded, hence the above $L^2$ bound on $Q$ implies local smooth convergence on $Q$ that gives us a harmonic function $\phi$. Being the limit of normal vectors, we may write
\[
	\phi=(0,0,\phi_3,\cdots ,\phi_n).
\]
As a consequence of $\tau_2(\hat{f}_k,S)=0$ for any $S\in \so$, we find that in the expansion of $\phi$, the coefficients before $e^{mt}\cos m\theta$ and $e^{mt}\sin m\theta$ are zero. Hence, by Lemma \ref{3-circle.harmonic}, \eqref{eqn:3C.harmonic} holds for $\phi$. This contradicts the limit of \eqref{3.w2}.
\endproof

\subsection{A gap theorem}
\label{subsec:gap}

In the discussion of the bubble tree problem, the gap theorem is very crucial, because it ensures that only a finite number of bubbles appear. To study the compactness of a sequence of conformal Willmore immersions with bounded Willmore energy and topology, we need to have some version of gap theorem for conformal and Willmore surfaces defined on $\C\setminus\{0\}\rightarrow\R^n$. As an application of our decay estimate, we provide such a theorem.  Other discussions on the gap theorem can be found in \cite{K-S}, \cite{B-R2}. Moreover, in \cite{L-N}, the authors classified all Willmore spheres in $\Real^3$ with no more than $3$ singularities (counting multiplicity), which naturally implies a gap theorem. A generalization of the result from Lamm-Nguyen can be found in \cite{M-R3}.

\begin{cor}\label{cor:gap}
	For any $\beta>0$, there exists a positive number $\tau$ depending on $\beta$. Let $f$ be a  conformal and Willmore immersion from $\R \times S^1$ into $\R^n$. Let $f^*(g_{\Real^n})=e^{2u}(dt^2+d\theta^2)$ and assume
	\begin{equation}
		\label{eqn:beta1}
	\sup_{\Real \times S^1}\abs{\nabla u}<\beta.
	\end{equation}
If $\|A_f\|_{L^2(\Real \times S^1)}<\tau$, then $A_f\equiv 0$ .
\end{cor}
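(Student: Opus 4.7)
The plan is to argue by contradiction: suppose a sequence $f_k:\R\times S^1\to\R^n$ of conformal Willmore immersions satisfies $\sup|\nabla u_k|<\beta$ (where $f_k^*(g_{\R^n})=e^{2u_k}(dt^2+d\theta^2)$), $\|A_{f_k}\|_{L^2(\R\times S^1)}\to 0$, and $A_{f_k}\not\equiv 0$. I will apply Theorem \ref{thm:out} on increasingly long sub-cylinders $[-T,T]\times S^1$ and send $T\to\infty$ to derive a contradiction.

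First, I would adapt the blow-up analysis of Lemma \ref{lem:b}. After passing to a subsequence, I expect to obtain a fixed integer $m$ with $1\leq |m|\leq\beta$ such that $f_k^*(g_{\R^n})=e^{-2mt+2v_k}(dt^2+d\theta^2)$ with $\|\nabla v_k\|_{L^\infty(\R\times S^1)}\to 0$. The key point is that any blow-up limit $f_\infty$ has $A_{f_\infty}=0$ (since $\|A_{f_k}\|_{L^2}\to 0$), so $f_\infty$ maps into a plane; its harmonic conformal factor, having bounded gradient on $\R\times S^1$, is forced to have the form $u_\infty=a+bt$. The fact that the resulting map is a conformal immersion periodic in $\theta$ rules out $b=0$ and forces $b\in\mathbb Z$. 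Without loss of generality $m=-b>0$ after reversing $t$.

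Second, I would verify the conservation laws $\tau_1(f_k,c)=0$ and $\tau_2(f_k,S)=0$ for every $c\in\R^n$ and $S\in\so$. Both quantities are independent of $t$ by Remark \ref{rem:t}, so it suffices to compute them as $t\to+\infty$. Since $\|A_{f_k}\|_{L^2(\R\times S^1)}<\infty$, $\epsilon$-regularity for Willmore surfaces yields uniform pointwise decay of $|A_k|,|H_k|,|\nabla H_k|$ as $t\to+\infty$. Moreover $|\partial_t f_k|=e^{u_k}\lesssim e^{-mt}$ is integrable near $+\infty$, so $f_k$ is uniformly bounded there. Each integrand in the definitions of $\tau_1$ and $\tau_2$ is then a product of uniformly decaying factors with bounded ones, so the integrals vanish in the limit.

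Third, for each $k$ large and each $T>0$, hypotheses (a)-(c) of Theorem \ref{thm:out} hold for $f_k|_{[-T,T]\times S^1}$ (after shifting to $[0,2T]\times S^1$), with $\tilde\delta_k\leq \|A_{f_k}\|_{L^2}^2$. Fixing $q\in(0,2)$,
\[
	\int_{[t,t+1]\times S^1}|A_k|^2\,dV_{g_k}\leq C\tilde\delta_k\bigl(e^{-q(t+T)}+e^{-q(T-t)}\bigr),\qquad t\in[-T,T-1].
\]
Fixing $t$ and $k$ and letting $T\to\infty$ kills the right-hand side, so $A_k\equiv 0$ on $[t,t+1]\times S^1$ for every $t$, hence on all of $\R\times S^1$, contradicting $A_{f_k}\not\equiv 0$.

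The main obstacle is step two: establishing $\tau_1=\tau_2=0$ requires uniform pointwise decay of both $H$ and $\nabla H$ at $t\to+\infty$ purely from the finite-$L^2$ bound on $A$ and the $L^\infty$ bound on $\nabla u$, together with the boundedness of $f_k$ at that end. I plan to handle this by iterated $\epsilon$-regularity estimates for Willmore surfaces, propagating smallness of $\int|A|^2$ on shrinking tails into higher-derivative bounds on $H$.
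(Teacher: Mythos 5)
Your Steps 1 and 3 match the paper's overall strategy: blow-up and Lemma \ref{lem:b} give the normalized conformal factor $e^{-2mt+2v_k}$ with $|m|\le\beta$ and $\|\nabla v_k\|_{L^\infty}\to 0$, and once (a)--(c) of Theorem \ref{thm:out} are in place, taking $T\to\infty$ kills the second fundamental form. The gap is in Step 2, where you try to verify $\tau_1(f_k,c)=\tau_2(f_k,S)=0$ by computing the integrands on slices $\{t\}\times S^1$ at the shrinking end $t\to+\infty$ (where $e^{u_k}\lesssim e^{-mt}$). The claim that ``$\epsilon$-regularity yields uniform pointwise decay of $|A_k|,|H_k|,|\nabla H_k|$'' is not correct: $\epsilon$-regularity gives scale-invariant bounds such as $\sup_{B_{\rho/2}}|A|\le C\rho^{-1}(\int_{B_\rho}|A|^2 dV_g)^{1/2}$, which in cylinder coordinates translate into bounds on \emph{weighted} quantities like $|A|e^{u}$, $|H|e^{u}$, $|\nabla H|e^{2u}$, etc. At the shrinking end $e^{-u}\approx e^{mt}\to\infty$, so the unweighted $|H|,\,|\nabla H|$ need not decay; each integrand in $\tau_1,\tau_2$ factors into a product of the form $(\text{weighted small quantity})\cdot(\text{power of } e^{-u})$, which is indeterminate. (Consider the inverted catenoid on $D\setminus\{0\}$, for which $\tau_1\neq 0$: the integrand does not vanish at the puncture despite $\|A\|_{L^2}<\infty$ and finite area.) The scaling identity $\tau_1(\lambda f,c)=\lambda^{-1}\tau_1(f,c)$ makes this precise: at the shrinking end one rescales by $\tilde\lambda_l\approx e^{m\tilde t_l}\to\infty$, so the relation $\tau_1(\tilde f_l,c)=\tilde\lambda_l^{-1}\tau_1(f,c)\to\tau_1(f_\infty,c)=0$ reads $0\cdot\tau_1(f,c)=0$, giving no information.

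The paper's actual proof avoids this by working at the \emph{complete} end, where the conformal factor grows and the area is infinite. There it invokes the M\"uller--\v{S}ver\'ak classification of complete immersions with finite total curvature (Theorem 4.2.1 of \cite{M-S}, together with $\epsilon$-regularity) to show that $\tilde f_l=\tilde\lambda_l(f(\cdot+\tilde t_l,\cdot)-f(\tilde t_l,0))$, with $\tilde\lambda_l$ comparable to $e^{-m\tilde t_l}\to 0$, converges smoothly to a rotation of $\frac{e^{mt}}{m}(\cos m\theta,\sin m\theta,0,\dots,0)$. Now $\tilde\lambda_l^{-1}\to\infty$, so $\tau_1(\tilde f_l,c)=\tilde\lambda_l^{-1}\tau_1(f,c)\to\tau_1(f_\infty,c)=0$ forces $\tau_1(f,c)=0$, and similarly for $\tau_2$ (using also that $\tau_2$ is scaling- and rotation-invariant and translation-invariant once $\tau_1$ vanishes, cf.\ Remark \ref{rem:tau1}). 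To make this rigorous the paper also needs to identify \emph{which} end is complete, which it does by a Gauss--Bonnet/integral estimate $|\frac{d}{dt}\int_{S^1}u\,d\theta-2\pi m|\le\frac14$ obtained from the smallness of $\int K e^{2u}$. So the missing ingredients in your write-up are: (i) the switch to the complete end rather than the shrinking one, (ii) the M\"uller--\v{S}ver\'ak convergence to a multi-sheeted plane, and (iii) exploiting the divergence of the inverse scaling factor rather than attempting pointwise decay of $H$ and its derivatives.
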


\begin{rem}
	We do not know if this gap $\tau$ can be proved to be independent of $\beta$. It is an interesting question. Nevertheless, it is not a problem when we use it to discuss the compactness of Willmore immersions with bounded Willmore energy and bounded genus, because we can derive such upper bound for $\beta$ once the energy upper bound is fixed. 
\end{rem}
\proof 
Fix $q\in (0,2)$ and let $\delta_1$ be given in Theorem \ref{thm:out}.

If there is no such $\tau$, then we have a sequence of $f_k$ satisfying all the assumptions of Corollary \ref{cor:gap} and
\[
	\lim_{k\to \infty } \norm{A_{f_k}}_{L^2(\Real \times S^1)}\to 0,
\]
but each $A_{f_k}$ is not zero somewhere. This sequence satisfies A.1)-A.3) when restricted to any segments $[A_k,B_k]\times S^1$ with $B_k-A_k\to \infty $. Lemma \ref{lem:b} then implies the existence of $M''$ such that the assumption (b) of Theorem \ref{thm:out} holds for $f_k|_{[-l,l] \times S^1}$ for any $l\in \mathbb N$ as long as $k> M''$. Obviously, if $M''$ is chosen larger, the assumption (c) also holds. Now, fix some $k>M''$, if we can apply Theorem \ref{thm:out} to $f_k|_{[-l,l]\times S^1}$ for any $l\in \mathbb N$, we could derive $A_{f_k}\equiv 0$ and obtain a contradiction.

Threrefore, it remains to prove the assumption (a) of Theorem \ref{thm:out}:
\begin{equation}
	\label{eqn:a}
\tau_{1}(f_k,c)=\tau_2(f_k,S)=0 \qquad \text{for all} \quad  c\in \Real^n,\s S\in \so.
\end{equation}

With $k$ fixed, we omit the subscript for simplicity and write $f$ for $f_k$. Let $u$ and $K$ be the corresponding conformal factor and the Gaussian curvature. If $k$ has been chosen large enough, we may assume
\[
	\abs{\int_{\Real \times S^1} K e^{2u} dtd\theta} \leq \frac{1}{2} \int_{\Real \times S^1} \abs{A}^2 e^{2u}dtd\theta<1/4.
\]
Hence, for any $t'<t''$, it follows from the equation $-\Delta u = K e^{2u}$ that 
\begin{equation}
	\label{eqn:K2u}
\left|\int_{\{t'\}\times S^1}\frac{\partial u}{\partial t} d\theta -\int_{\{t''\}\times S^1}\frac{\partial u}{\partial t} d\theta \right|=\left|\int_{(t_1,t_2)\times S^1}Ke^{2u}dtd\theta\right|<\frac{1}{4}.
\end{equation}
Given $t_j\rightarrow+\infty$, by \eqref{eqn:beta1} and the $\epsilon$-regularity, we can find $\lambda_j$ and $c_j$ such that (after passing to subsequence) $\hat{f}_j(t,\theta):= \lambda_j(f(t-t_j,\theta)-c_j)$ converges smoothly locally to a conformal Willmore immersion $f_\infty$ defined on $\Real \times S^1$ and the second fundamental form of $f_\infty $ is zero. By the same argument as in Lemma \ref{lem:b}, we find that if $u_\infty $ and $\hat{u}_j$ are the conformal factors of $f_\infty $ and $\hat{f}_j$ respectively, then
\[
	\partial_t u_\infty = m \qquad \text{for some} \quad m\in \mathbb Z\setminus \set{0}.
\]
Hence,
$$
\frac{1}{2\pi}\int_0^{2\pi}\frac{\partial u}{\partial t}(t_j,\theta) d\theta= \frac{1}{2\pi}\int_0^{2\pi}\frac{\partial \hat{u}_j}{\partial t}(0,\theta) d\theta \rightarrow m.
$$ 
By \eqref{eqn:K2u}, we have 
$$
2\pi m-\frac{1}{4}\leq\frac{d}{dt}\int_{0}^{2\pi}u(t,\theta) d\theta\leq 2\pi m+\frac{1}{4},\qquad \text{for all} \quad t\in \Real.
$$
By a scaling of $f$ if necessary, we may assume $\int_0^{2\pi} u(0,\theta)d\theta=0$, which implies that
$$
(2\pi m-\frac{1}{4})t\leq \int_0^{2\pi}u(t,\theta) d\theta \leq (2\pi m+\frac{1}{4})t, \qquad \text{for all} \quad t\in \Real.
$$
Together with our assumption that $|\nabla u|<\beta$, we get  
$$
-\pi\beta +( m-\frac{1}{8\pi})t\leq u\leq \pi \beta+(m+\frac{1}{8\pi})t.
$$ 
Depending on the sign of $m$, either $Area(f|_{(0,+\infty)\times S^1})$ or $Area(f|_{(-\infty,0)\times S^1})$ is infinity. Assume without loss of generality that $Area(f|_{(0,+\infty)\times S^1})=\infty$ and by the lower bound of $u$, the surface is complete at this end. Combining Theorem 4.2.1 in \cite{M-S} and the $\epsilon$-regularity of \cite{K-S,R}, we know that for some $\tilde{t}_l\to \infty$ and $\tilde{\lambda}_l$ uniformly comparable with $e^{-m \tilde{t}_l}$, the sequence
\[
	\tilde{f}_l(t,\theta):= \tilde{\lambda}_l (f(t+\tilde{t}_l,\theta)-f(\tilde{t}_l,0)) 
\]
converges smoothly locally on $\Real \times S^1$ to a rotation of
\[
	f_\infty (t,\theta)=\frac{ e^{mt}}{m}(\cos m\theta,\sin m\theta,0,\cdots ,0).
\]
Hence, for any $c\in \Real^n$,
\[
	\tau_1(\tilde{f}_l,c)= \tilde{\lambda}_l^{-1} \tau_1(f,c) \to \tau_1(f_\infty ,c)=0 \qquad \text{as} \quad l\to \infty 
\]
which implies that $\tau_1(f,c)=0$. Similarly,
\[
	\tau_2(f,S)=\tau_2(\tilde{f}_l,S)\to \tau_2(f_\infty ,S)=0 \qquad \text{as} \quad l\to \infty 
\]
for any $S\in \so$, which implies that $\tau_2(f,S)=0$.
Hence, we have finished the proof of \eqref{eqn:a} and therefore the whole proof of Corollary \ref{cor:gap}.
%
%
%

\endproof

%
%
\appendix

\section{A 3-circle lemma for harmonic function}
In this section, we prove a 3-circle lemma for the Dirichlet energy of harmonic functions. Such a result is well-known. There are many versions of such results in the literature, but we haven't found a proof for the following version that is needed in this paper. As a service to the readers, we provide the statement and a proof here.

\begin{lem}
	\label{lem:appendix} 
	For any $q\in (0,2)$, there exists $L_0(q)$ such that if $L>L_0$, the following holds. Define $Q_i:=[(i-1)L,iL] \times S^1$.
Let $v$ be a harmonic function from $Q=Q_1\cup Q_2\cup Q_3$. If
\begin{equation}\label{Po.equation}
\int_{Q_2}\left( |\partial_t v|^2- \abs{\partial_\theta v}^2\right) dtd\theta=0,
\end{equation}
then 
\[
\int_{Q_2} \abs{\nabla v}^2 dtd\theta\leq e^{-qL}\left( \int_{Q_1} \abs{\nabla v}^2 dtd\theta + \int_{Q_3} \abs{\nabla v}^2 dtd\theta \right).
\]
\end{lem}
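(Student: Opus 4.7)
The plan is to run a direct Fourier computation together with the Pohozaev-type constraint \eqref{Po.equation}, exactly mimicking the structure of the argument already used in Lemma \ref{3-circle.harmonic} but now for the Dirichlet energy rather than the weighted $L^{2}$ norm.

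First I would expand $v$ on the cylinder as
\[
v(t,\theta)=a+bt+\sum_{k\geq 1}\bigl((a_{k}e^{-kt}+b_{k}e^{kt})\cos k\theta+(a_{k}'e^{-kt}+b_{k}'e^{kt})\sin k\theta\bigr),
\]
and use orthogonality of $\{1,\cos k\theta,\sin k\theta\}$ to compute
\[
\int_{Q_{i}}|\nabla v|^{2}\,dtd\theta=2\pi Lb^{2}+\sum_{k\geq 1}\bigl(E_{i,k}+E'_{i,k}\bigr),
\]
where
\[
E_{i,k}=\pi k\bigl[a_{k}^{2}e^{-2k(i-1)L}(1-e^{-2kL})+b_{k}^{2}e^{2k(i-1)L}(e^{2kL}-1)\bigr],
\]
and similarly for $E'_{i,k}$ in terms of $(a_{k}',b_{k}')$. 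A parallel computation gives the only contributions to $\int_{Q_{2}}(|\partial_{t}v|^{2}-|\partial_{\theta}v|^{2})\,dtd\theta$: the linear mode contributes $2\pi Lb^{2}$ and each oscillating mode contributes the cross term $-4\pi Lk^{2}(a_{k}b_{k}+a_{k}'b_{k}')$. Hence \eqref{Po.equation} is equivalent to the single scalar identity
\[
b^{2}=2\sum_{k\geq 1}k^{2}(a_{k}b_{k}+a_{k}'b_{k}').
\]

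Next I would handle the modes $k\geq 1$ by splitting $E_{i,k}=E_{i,k}^{a}+E_{i,k}^{b}$ into the $a_{k}^{2}$ and $b_{k}^{2}$ pieces. The geometric-sequence identities
\[
E_{2,k}^{a}=e^{-2kL}E_{1,k}^{a},\qquad E_{2,k}^{b}=e^{-2kL}E_{3,k}^{b}
\]
immediately give $E_{2,k}\leq e^{-2kL}(E_{1,k}+E_{3,k})\leq e^{-2L}(E_{1,k}+E_{3,k})$, and the same for the primed quantities. So the oscillating part of the Dirichlet energy already obeys a three-circle bound with ratio $e^{-2L}$, which is stronger than $e^{-qL}$ for any $q<2$ and $L$ large.

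The main obstacle, and the only place where \eqref{Po.equation} is used, is the linear mode: if $b\neq 0$ then $2\pi Lb^{2}$ is the same on each $Q_{i}$, so the constant plus linear part by itself cannot contribute to an exponential three-circle inequality. The idea is to absorb it into the oscillating modes via the Pohozaev identity. Applying AM-GM with the weight $\lambda_{k}=e^{-2kL}$ gives
\[
2k^{2}|a_{k}b_{k}|\leq k^{2}e^{-2kL}a_{k}^{2}+k^{2}e^{2kL}b_{k}^{2},
\]
and then, using $E_{1,k}^{a}\geq \tfrac{\pi k}{2}a_{k}^{2}$ and $E_{3,k}^{b}\geq \tfrac{\pi k}{2}b_{k}^{2}e^{6kL}$ (valid for $L\geq 1$), one checks
\[
2\pi L\cdot k^{2}e^{-2kL}a_{k}^{2}\leq 4Lke^{-2kL}E_{1,k}^{a}\leq 4Le^{-2L}E_{1,k}^{a},
\]
\[
2\pi L\cdot k^{2}e^{2kL}b_{k}^{2}\leq 4Lke^{-4kL}E_{3,k}^{b}\leq 4Le^{-4L}E_{3,k}^{b},
\]
since the functions $k\mapsto Lke^{-2kL}$ and $k\mapsto Lke^{-4kL}$ are decreasing for $k\geq 1$ when $L\geq 1$. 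Summing and repeating for the primed modes yields
\[
2\pi Lb^{2}\leq 4Le^{-2L}\bigl(\mathcal{E}_{1}+\mathcal{E}_{3}\bigr),
\]
where $\mathcal{E}_{i}=\int_{Q_{i}}|\nabla v|^{2}\,dtd\theta$.

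Finally, combining the two contributions,
\[
\mathcal{E}_{2}=2\pi Lb^{2}+\sum_{k}(E_{2,k}+E'_{2,k})\leq (4L+1)e^{-2L}(\mathcal{E}_{1}+\mathcal{E}_{3}).
\]
Since $q<2$, the factor $(4L+1)e^{-(2-q)L}$ tends to $0$ as $L\to\infty$, so choosing $L_{0}(q)$ so that $(4L+1)e^{-(2-q)L}\leq 1$ for $L>L_{0}$ yields $\mathcal{E}_{2}\leq e^{-qL}(\mathcal{E}_{1}+\mathcal{E}_{3})$, which is the desired conclusion. The key idea I expect to be delicate is the choice of the weight $\lambda_{k}=e^{-2kL}$ in the AM-GM step: the natural choice $\lambda_{k}=1$ leaves an uncontrolled $\sum k^{2}a_{k}^{2}$ on the right, and only by making the weight itself exponential in $kL$ does one achieve the cancellation with the exponential growth/decay factors hidden in $E_{1,k}^{a}$ and $E_{3,k}^{b}$.
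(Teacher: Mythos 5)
Your proof is correct, but it takes a genuinely different route from the paper's. The paper first observes that $\int_{\{t\}\times S^1}\bigl(|\partial_t v|^2-|\partial_\theta v|^2\bigr)\,d\theta$ is constant in $t$ for harmonic $v$ (integrate $\partial_t v\,\Delta v=0$ over $S^1$), so \eqref{Po.equation} forces this slice quantity to vanish identically; hence $\int_{\{t\}\times S^1}|\nabla v|^2\,d\theta = 2\int_{\{t\}\times S^1}|\partial_\theta v|^2\,d\theta$ for every $t$, and since $\partial_\theta v$ has no constant or linear mode, Parseval reduces the lemma mode-by-mode to the elementary scalar inequality $(a+b)^2\leq e^{-qL}\bigl((ae^{L}+be^{-L})^2+(ae^{-L}+be^{L})^2\bigr)$. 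You instead apply Parseval directly to the Dirichlet energy, keep the obstructing $2\pi Lb^2$ term, translate \eqref{Po.equation} into its Fourier form $b^2=2\sum_k k^2(a_kb_k+a'_kb'_k)$, and absorb it into the oscillating modes via the exponentially weighted AM--GM $2|a_kb_k|\leq e^{-2kL}a_k^2+e^{2kL}b_k^2$. I checked the computations: the Parseval formula for $\int_{Q_i}|\nabla v|^2$, the identity for the Pohozaev integral, the geometric relations $E_{2,k}^a=e^{-2kL}E_{1,k}^a$ and $E_{2,k}^b=e^{-2kL}E_{3,k}^b$, and the bounds $E_{1,k}^a\geq\tfrac{\pi k}{2}a_k^2$, $E_{3,k}^b\geq\tfrac{\pi k}{2}b_k^2e^{6kL}$, $4Lke^{-2kL}\leq 4Le^{-2L}$ for $L\geq 1$, $k\geq 1$ are all correct, yielding $\mathcal{E}_2\leq(4L+1)e^{-2L}(\mathcal{E}_1+\mathcal{E}_3)$, which indeed beats $e^{-qL}$ for large $L$ since $q<2$. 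The paper's route is shorter because the pointwise-in-$t$ constancy of the Pohozaev integrand eliminates the linear mode before any Fourier analysis; your route is more computational but has the virtue of making explicit the mechanism by which the Pohozaev constraint trades the linear mode against the oscillating ones, which is precisely what makes the exponential weight $e^{-2kL}$ (rather than the naive unit weight) essential in the AM--GM step.
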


\proof 
Multiplying both sides of $(\partial_t^2 +\partial_\theta^2) v=0$ by $\partial_t v$ and integrating by parts, we get
\[
	\partial_t \int_{\set{t}\times S^1} \left( |\partial_t v|^2- \abs{\partial_\theta v}^2\right) d\theta =0 \qquad \text{for all} \quad t\in [0,3L].
\]
Hence, \eqref{Po.equation} implies that
\begin{equation}
	\label{eqn:goodt}
	\int_{\set{t}\times S^1} \left( |\partial_t v|^2- \abs{\partial_\theta v}^2\right) d\theta =0 \qquad \text{for all} \quad t\in [0,3L].
\end{equation}
By \eqref{eqn:goodt} and integrating over $t$, it suffices to prove
\[
	\int_{\set{t}\times S^1} \abs{\partial_\theta v}^2 d\theta \leq e^{-qL} \left( \int_{\set{t-L}\times S} \abs{\partial_\theta v}^2 d\theta + \int_{\set{t+L}\times S^1} \abs{\partial_\theta v}^2 d\theta \right).
\]
By translation invariance, this is reduced to the special case $t=0$,
\begin{equation}
	\label{eqn:reduce}
	\int_{\set{0}\times S^1} \abs{\partial_\theta v}^2 d\theta \leq e^{-qL} \left( \int_{\set{-L} \times S^1} \abs{\partial_\theta v}^2 d\theta + \int_{\set{L}\times  S^1} \abs{\partial_\theta v}^2 d\theta \right).
\end{equation}
To see this, we notice that $\partial_\theta v$ is a special harmonic function in the sense that in its expansion, there is no constant term or linear term, that is,
\[
\partial_\theta v =\sum_{k=1}^\infty(a_ke^{kt}+b_ke^{-kt})\cos k\theta+\sum_{k=1}^\infty(a_k'e^{kt}+b_k'e^{-kt})\sin k\theta,
\]
By Parseval's theorem, proving \eqref{eqn:reduce} amounts to verify that for any $a,b\in \Real$, we have
\[
	(a+b)^2 \leq e^{-qL}\left( (ae^L+be^{-L})^2 + (ae^{-L}+be^{L})^2 \right)
\]
as long as $L>L_0$. The existence of such $L_0$ is elementary.

%

\endproof

{\small
\end{document}